\providecommand{\tabularnewline}{\\}
\begin{document}
\title{Least-squares formulations for Stokes equations with non-standard boundary conditions - A unified approach}
%{Spectral element solver for Stokes equations with non-standard boundary conditions }
\author{S. Mohapatra\thanks{IIIT Delhi, Email: subhashree@iiitd.ac.in}$^{\star}$,
N. Kishore Kumar\thanks{BITS-Pilani Hyderabad Campus, Hyderabad, Email: naraparaju@hyderabad.bits-pilani.ac.in}$^{\dagger}$,
Shivangi Joshi\thanks{BITS-Pilani Hyderabad Campus, Hyderabad, Email: p20200036@hyderabad.bits-pilani.ac.in}$^{\dagger}$
}
\date{~}
\maketitle
\def \R{{{\rm I{\!}\rm R}}}
%%%%%%%%%%%%%%%%%%%%%%%%%%%%%%%%%%%%%%%%%%%%%%
\newtheorem{thm}{Theorem}[section]
\newtheorem{prop}{Proposition}[section]
\newtheorem{lem}{Lemma}[section]
\newtheorem{coro}{Corollary}[section]
\newtheorem{rem}{Remark}[section]
\newtheorem{guess8}{Example}[section]
%%%%%%%%%%%%%%%%%%%%%%%%%%%%%%%%%%%%%%%%%%%%%%
%\newcommand{\nuw}{\mbox{\boldmath$\nu$}}
\newcommand{\vb}{\mbox{\boldmath$v$}}
\newcommand{\eb}{\mbox{\boldmath$e$}}
\newcommand{\ub}{\mbox{\boldmath$u$}}
\newcommand{\Ub}{\mbox{\boldmath$U$}}
\newcommand{\Vb}{\mbox{\boldmath$V$}}
\newcommand{\Fb}{\mbox{\boldmath$F$}}
\newcommand{\Zb}{\mbox{\boldmath$Z$}}
\newcommand{\Ab}{\mbox{\boldmath$A$}}
\newcommand{\Gb}{\mbox{\boldmath$G$}}
\newcommand{\fb}{\mbox{\boldmath$f$}}
\newcommand{\tb}{\mbox{\boldmath$t$}}
\newcommand{\zb}{\mbox{\boldmath$z$}}
\newcommand{\lb}{\mbox{\boldmath$l$}}
\newcommand{\gb}{\mbox{\boldmath$g$}}
\newcommand{\hb}{\mbox{\boldmath$h$}}
\newcommand{\ob}{\mbox{\boldmath$o$}}
\newcommand{\Hb}{\mbox{\boldmath$H$}}
\newcommand{\Rb}{\mbox{\boldmath$R$}}
\newcommand{\ab}{\mbox{\boldmath$a$}}
\newcommand{\xb}{\mbox{\boldmath$x$}}
\newcommand{\kb}{\mbox{\boldmath$k$}}
\newcommand{\sbb}{\mbox{\boldmath$s$}}
\newcommand{\phb}{\mbox{\boldmath$\alpha$}}
\newcommand{\bb}{\mbox{\boldmath$b$}}
\newcommand{\betab}{\mbox{\boldmath$\beta$}}
\newcommand{\omegab}{\mbox{\boldmath$\omega$}}
\newcommand{\sigmab}{\mbox{\boldmath$\sigma$}}
\newcommand{\varb}{\mbox{\boldmath$\varrho$}}
\newcommand{\BBb}{\mbox{\boldmath$\mathfrak{B}$}}
\newcommand{\nuw}{\mbox{\boldmath$\nu$}}
\newcommand{\taub}{\mbox{\boldmath$\tau$}}
\newcommand{\etav}{\mbox{\boldmath$\eta$}}
\newcommand{\zetau}{\mbox{\boldmath$\zeta$}}
\newcommand{\Pib}{\mbox{\boldmath$\Pi$}}
\newcommand{\Phiy}{\mbox{\boldmath$\Phi$}}
\newcommand{\psy}{\mbox{\boldmath$\psi$}}
\newcommand{\Tht}{\mbox{\boldmath$\Theta$}}
%~~~~~~~~~~~~~~~~~~~~~~~~~~~~~~~~~~~~~~~~~~~
\newcommand{\wa}{\mbox{\boldmath$w$}}
\newcommand{\nb}{\mbox{\boldmath$n$}}
\newcommand{\mc}{\mbox{\boldmath$m$}}
\newcommand{\hd}{\mbox{\boldmath$h$}}
\newcommand{\fe}{\mbox{\boldmath$f$}}
\newcommand{\Bb}{\mbox{\boldmath$B$}}
\newcommand{\Cb}{\mbox{\boldmath$C$}}
\newcommand{\Db}{\mbox{\boldmath$D$}}
\newcommand{\Eb}{\mbox{\boldmath$E$}}
\newcommand{\Lb}{\mbox{\boldmath$L$}}
\newcommand{\Wb}{\mbox{\boldmath$W$}}
\newcommand{\MIG}[1]{\textcolor{red}{#1}}
%=============================================
\abstract{
In this paper, we propose a unified  non-conforming least-squares spectral element approach for solving Stokes equations with various 
non-standard boundary conditions. Existing least-squares formulations mostly deal with Dirichlet
boundary conditions and are formulated using ADN theory based regularity estimates. However, changing
boundary conditions lead to a search for parameters satisfying supplementing and complimenting conditions \cite{ADN} which is not easy always.  Here we have avoided ADN  theory based regularity estimates
and proposed a unified approach for dealing with various boundary conditions. Stability estimates and error estimates have been discussed. Numerical results
displaying exponential accuracy have been presented for both two and three dimensional
cases with various boundary conditions.
}
%----------------------------------------------
\section{Introduction}
Efficient numerical approximation of Stokes equations has been of great theoretical and
computational interest for a long time as it is the first step to investigate nonlinear
Navier Stokes equations. A search on numerical solutions of Stokes equations leads to an enormous
amount of contributions. However, very few are available for Stokes equations with non-standard boundary conditions, i.e. boundary conditions other than Dirichlet. There are several practical instances like blood flow in arteries, water flow in pipes with bifurcation, 
oil ducts etc., which need to be analyzed  other than velocity based Dirichlet boundary condition. A major challenge with changing the boundary conditions is to incorporate them in variational formulation and corresponding functional spaces.

Here we refer boundary conditions other than Dirichlet as non-standard boundary conditions
and have listed eighteen different boundary conditions, being attached with Stokes differential operator from our literature survey.
One common mathematical difficulty can be described as difficulty in understanding wellposedness of both continuous problem and discrete problem. Functional spaces need to be modified accordingly. 
%Development of theoretical part addressing wellposedness and regularity estimates for continuous problems have been discussed in Section 2.1.   Development on different numerical formulations have been discussed in Section 2.2. 
Major difficulty with mixed finite element formulation with change in boundary conditions is to have appropriate saddle point formulation. 

In this paper, we propose a unified approach for solving Stokes equations with various non-standard boundary
conditions. The numerical scheme is based on a non-conforming least squares spectral element approach. Because of the least-squares 
formulation, the obtained linear system is always symmetric and positive definite in each case of boundary condition, hence we solve the linear system
using the preconditioned conjugate gradient method. Minimization of least-squares functional consists of two parts. The first part consists of residues
in differential operators and jumps in velocity and pressure components (along interelement boundaries). The second part consists of residues from the physical boundary conditions. This part varies with change in the boundary conditions. Norms are suitably chosen from
available regularity estimates.
Using the unified approach, any of these boundary conditions can be used
with the Stokes problem and just boundary related terms need to be changed in the least-squares functional. No need to change in any other functional settings. 

In section 2, we introduce Stokes equations with various non-standard boundary conditions, their physical motivations, theoretical, computational developments and available regularity estimates. 
%A literature survey on theoretical and computational developments on Stokes equations with these boundary conditions has been presented. 
 Stability estimates for numerical solutions have been discussed in section 3. Numerical schemes have been discussed in section 4. Error estimates are discussed in section 5. Numerical results with some of these boundary
conditions have been presented in section 6. Finally, we conclude with section 7. 

\section{Notations}
Let $\Omega\subset\mathbb{R}^{n}, n=2,3$ with sufficiently smooth boundary $\Gamma=\partial\Omega$.
Here we consider the Stokes system
\begin{align}
\begin{cases}
-\Delta\ub+\nabla p=\fb \quad\mbox{in}\quad\Omega,\\
-\nabla\cdot\ub =\chi \quad\mbox{in}\quad\Omega\\
\end{cases}\label{eq1}
\end{align}
with different boundary conditions other than just Dirichlet boundary conditions.
 Basic least-squares formulation leads to minimize the residuals in differential operators and residuals in 
 boundary conditions in appropriate norms (\cite{PR1}, \cite{PR2}, \cite{PR3}, \cite{KIM1}, \cite{DG}, \cite{BD}, 
 \cite{BD1},\cite{JN1},\cite{SM1},\cite{SM2}, \cite{CH1},\cite{CH2}, \cite{JI1}).

 Let's denote $\mathcal{L}(\ub,p)=-\Delta\ub+\nabla p, \mathcal{D}\ub=-\nabla\cdot\ub$. Let $\nb=(n_{1},n_{2}), (n_1,n_2,n_3)$ be the unit outward normal to $\Gamma=\partial\Omega$ 
for $\Omega\subset {\mathbb{R}}^{2}, {\mathbb{R}}^{3}$ respectively.
$u_{\nb}=\ub.\nb,\ub_{\tau}=\ub-\nb u_{\nb}$ denote the normal and tangential components
of the velocity. $\sigma_{\nb}=\sigma_{\nb}(\ub,p)=\sigma(\ub,p).\nb,\sigma_{\taub}=\sigma_{\taub}(\ub,p)=\sigma(\ub,p)-\nb\sigma_{\nb}(\ub,p)$
denote normal and tangential components of stress vector. Here $\sigma(\ub,p)=(-p\delta_{ij}+e_{ij}\ub)\nb, e_{ij}(\ub)= \frac{\partial u_{i}}{\partial x_{j}}+\frac{\partial u_{j}}{\partial x_{i}}$.
 $\mathbb{D}(\ub)=\frac{1}{2}(\nabla\ub+\nabla\ub^{T})$ denotes the strain tensor. 
 Let's denote $L^{2}(\Omega)/{\mathbb{R}}=\left\{q\in L^{2}(\Omega):\int_{\Omega}qd\Omega=0\right\}.$
 $c$ is treated as a generic constant, which has different values with different situations.
 %$T(\ub,p)\nb=[\nabla\ub+{(\nabla\ub)}^{T}-pI]\nb.$
%\begin{align}
% \begin{cases}
%  T=pI+\nu(\nabla\ub+\nabla\ub^{T}), \underline{t}=T.\underline{\nb}\\
%  T_{ik}=-\delta_{ik}p+\nu(\frac{\partial u_{i}}{\partial x_{k}}+\frac{\partial u_{k}}{\partial x_{i}})
%  ,t_{i}=\sum_{k=1}^{3}T_{ik}n_{k}\\
%  \underline{\tau}(\ub)=\underline{t}-(\underline{t}.\underline{n})\underline{n},
%  \ub_{\tau}=\ub-(\ub.\underline{\nb})\nb
%  \end{cases}\cite{VE1}
%\end{align}
%-------------------------------------------------------

$H^{m}(\Omega)$ denotes the Sobolev space of functions with
square integrable derivatives of integer order less than or equal to $m$ on $\Omega$ equipped with the norm
\begin{align}
{\|u\|}_{H^{m}(\Omega)}^{2}=\sum_{\left|\alpha\right|\leq m} {\|D^{\alpha}u\|}_{{L^{2}(\Omega)}^{2}}.\notag
\end{align}
Further, let $E=(-1,1)^{d-1}$. Then we define fractional norms $(0<\mu<1)$ by :
\begin{itemize}
 \item {if $\Gamma_l\in\partial\Omega\!\!\!\quad\mbox{when}\quad\!\!\!\Omega\subset \mathbb{R}^{2}$}
\begin{align}
\left\Vert u\right\Vert _{\mu,\Gamma_{l}}^{2}=\|u\|_{0,E}^2+\int_{E}\int_{E}\frac{\left|u(\xi)-u(\xi^{\prime})\right|^{2}}
{\left|\xi-\xi^{\prime}\right|^{1+2\mu}}d\xi d\xi^{\prime}.\notag
\end{align}
\end{itemize}
\begin{itemize}
\item{if $\Gamma_l\in\partial\Omega\!\!\!\quad\mbox{when}\quad\!\!\!\Omega\subset \mathbb{R}^{3}$}
\begin{align}
\|u\|_{\mu, \Gamma_l}^2 = \|u\|_{0,E}^2 + \int_{E}\int_{E}\int_{E}
\frac{(u(\xi,\eta)-u(\xi^{\prime},\eta))^2}{(\xi-\xi^{\prime})^{1+2\mu}}\,d\xi d\xi^{\prime}d\eta\notag\\
+\int_{E}\int_{E}\int_{E}\frac{(u(\xi,\eta)-u(\xi,\eta^{\prime}))^2}
{(\eta-\eta^{\prime})^{1+2\mu}}\,d\eta d\eta^{\prime}d\xi\,.\notag
\end{align}
\end{itemize}
%Here, $w=u, v$ or $p$. 
%Moreover,
%\begin{align*}%\label{eq3.5}
%\|u\|_{1+\mu, \Gamma_{l}}^2 = \|u\|_{0,E}^2+\left\|\:\frac{\partial u}{\partial\xi}
%\:\right\|_{\mu, E}^2+\left\|\:\frac{\partial u}{\partial\eta}\:\right\|_{\mu, E}^2\;.
%\end{align*}
We  denote the vectors by bold letters. For example if $\Omega\subset \mathbb{R}^{2}$, $\ub =(u_{1},u_{2})^{T}$, $\Hb^{k}(\Omega)=H^{k}(\Omega)
\times H^{k}(\Omega)$, $\|\ub\|_{k,\Omega}^{2}=\|u_{1}\|_{k,\Omega}^{2}+\|u_2\|_{k,\Omega}^{2}$ etc.. We define the functional spaces and norms similarly
for three dimensional case.
%-------------------------------------------------------

Next, we present different boundary conditions existing in the literature that have been
encountered with Stokes equations in various physical applications. We plan to discuss
exponentially accurate least-squares spectral element formulations for Stokes equations with each of these boundary conditions. 
\begin{itemize}
 \item (B1)  $\Gamma=\bar{\Gamma}_{1}\cup\bar{\Gamma}_{2}$
               \begin{align}
               \begin{cases}
                \ub=\gb \quad\!\!\!\mbox{on}\quad\!\!\!\Gamma_{1}\\
                \ub.\nb=\gb_{\nb}, (\nabla\times\ub)\times\nb
                =\kb\times\nb\quad\!\!\!\mbox{on}\quad\!\!\!\Gamma_{2}
               \end{cases}(\cite{AM7})\notag
              \end{align}
 \item (B2) $\ub_{\nb}=\gb_{\nb},[\frac{\partial\ub}{\partial\nb}-p\nb+b\ub]_{\taub}=\hb_{\taub}$ on $\Gamma$ (\cite{ME1},\cite{ME2})
 \item (B3) $\ub_{\nb}=\gb_{\nb},\big([\nabla\ub+(\nabla\ub)^{T}-pI]\nb+b\ub\big)_{\taub}=\hb_{\taub}$ on $\Gamma$ (\cite{ME1},\cite{ME2})
 \item (B4) $\ub.\nb=g$, $(\nabla\times\ub)\times\nb=\hb\times\nb$ on $\Gamma$ (\cite{AM6},\cite{AM2},\cite{BR1})
 \item (B5) $\ub\times\nb=\gb\times\nb, p=\tilde{p}$ on $\Gamma$ (\cite{AM5},\cite{AM2})
 \item (B6) $\Gamma=\bar{\Gamma}_{1}\cup\bar{\Gamma}_{2}\cup\bar{\Gamma}_{3}$
            \begin{align}
            \begin{cases}
            \ub=\ub_{0}\quad\!\!\mbox{on}\quad\!\!\Gamma_{1},\\
            \ub\times\nb=\ab\times\nb, p=p_{0}\quad\!\!\mbox{on}\quad\!\!\Gamma_{2},\\
            \ub.\nb=\bb.\nb,(\nabla\times\ub)\times\nb=\hb\times\nb\quad\!\!\mbox{on}\quad\!\!\Gamma_{3}
            \end{cases}(\cite{CO1},\cite{BE1},\cite{CO2})\notag
            \end{align}
 \item (B7) $\ub.\nb=g,[(2\mathbb{D}\ub)\nb]_{\taub}=\hb\quad\!\!\mbox{on}\quad\!           
 \!\Gamma$ (\cite{AC1}, \cite{AM5})
 \item  (B8) $\ub.\nb=0, (\nabla\times\ub)_{\taub}=0\quad\!\!\mbox{on}\quad\!\!\Gamma$ (\cite{BE3})
 \item (B9)  $\Gamma=\bar{\Gamma}_{1}\cup\bar{\Gamma}_{2}$
             \begin{align}
             \begin{cases}
             \ub=\gb\quad\!\!\!\mbox{on}\quad\!\!\!\Gamma_{1},\\
              p=\phi,\ub\times\nb=\tilde{g}\quad\!\!\!\mbox{on}\quad\!\!\!\Gamma_{2}
              \end{cases}(\cite{BE2})\notag
              \end{align}
 \item (B10) $\big((\nu\nabla\ub-pI)\nb\big).\nb=\gb \quad\!\!\!\mbox{on}\quad\!\!\!\Gamma,
  \ub_{\taub}=0\quad\!\!\!\mbox{on}\quad\!\!\!\Gamma$ (\cite{BA1})
 \item  (B11) $\ub.\nb=0,\nabla\times\ub=0\quad\!\!\!\mbox{on}\quad\!\!\!\Gamma$ (\cite{OL1})
 \item (B12) $\Gamma=\bar{\Gamma}_{1}\cup\bar{\Gamma}_{2}\cup\bar{\Gamma}_{3}$
           \begin{align} 
            \begin{cases}
              \ub=\ub_{0}=(u_{01},u_{02})\quad\!\!\!\mbox{on}\quad\!\!\!\Gamma_{1},\\
              \ub\times\nb=u_{02},p=\phi\quad\!\!\!\mbox{on}\quad\!\!\!\Gamma_{2},\\
              \ub.\nb=u_{01},(\nabla\times\ub).\taub=\hb \quad\!\!\!\mbox{on}\quad\!\!\!\Gamma_{3}
            \end{cases} \cite{BE4}\notag
            \end{align}
  \item (B13) $\Gamma=\bar{\Gamma}_{1}\cup\bar{\Gamma}_{2}\cup\bar{\Gamma}_{3}$
               \begin{align}
               \begin{cases}
               \ub=\gb\quad\!\!\!\mbox{on}\quad\!\!\!\Gamma_{1},\\
               \nb.\ub=\gb_{n}, \nb\times((\nabla\times\ub)\times\nb)=\wa_{s}\quad\!\!\!\mbox{on}\quad\!\!\!\Gamma_{2},\\
               p=\psi, \nb\times(\ub\times\nb)=\gb_{s}\quad\!\!\!\mbox{on}\quad\!\!\!\Gamma_{3},\\
               p=\psi, \nb\times((\nabla\times \ub)\times\nb)=\gb_{s}\quad\!\!\!\mbox{on}\quad\!\!\!\Gamma_{4}
               \end{cases}(\cite{HU1})\notag
              \end{align}          
%  \item (B13) $\Gamma=\Gamma_{1}\cup\Gamma_{2}\cup\Gamma_{3}$
%              \begin{align}
%               \begin{cases}
%                \ub.\nb=\ub_{0}.\nb,\ub.\tb=\ub_{0}.\tb \quad\!\!\!\mbox{on}\quad\!\!\!\Gamma_{1},\\
%                \ub.\tb=\ab.\tb,p=p_{0}\quad\!\!\!\mbox{on}\quad\!\!\!\Gamma_{2},\\
%                \ub.\nb=\bb.\nb, w=w_{0}\quad\!\!\!\mbox{on}\quad\!\!\!\Gamma_{3}
%                \end{cases} (\cite{AM4})\notag
%                \end{align}
  \item (B14) $\Gamma=\bar{\Gamma}_{0}\cup\bar{\Gamma}_{1}$
              \begin{align}
               \begin{cases}
                \ub=0\quad\!\!\!\mbox{on}\quad\!\!\!\Gamma_{0},\\
                \ub_{\taub}=0,\sigma_{\nb}=\omega_{\nb}\quad\!\!\!\mbox{on}\quad\!\!\!\Gamma_{1}
                \end{cases}(\cite{SA1})\notag
               \end{align} 
  \item (B15) $\Gamma=\bar{\Gamma}_{0}\cup\bar{\Gamma}_{1}$
              \begin{align}
               \begin{cases}
               \ub=0\quad\!\!\!\mbox{on}\quad\!\!\!\Gamma_{0},\\
               \ub_{\nb}=0,\sigma_{\taub}=\omega_{\taub}\quad\!\!\!\mbox{on}\quad\!\!\!\Gamma_{1}
               \end{cases}(\cite{SA1})\notag
               \end{align}
  \item (B16) $\ub.\nb=g,\nu{\ub}_{\taub}+{[\sigma(\ub,p).\nb]}_{\taub}=\sbb\quad\!\!\!\mbox{on}\quad\!\!\!\Gamma$ (\cite{RU1}, \cite{VE1})
  \item (B17) $\ub.\taub=g, p=h \quad\!\!\!\mbox{on}\quad\!\!\!\Gamma$ (\cite{ME1})
  \item (B18) $\ub.\nb=g, {[(2\mathbb{D}\ub)\nb]}_{\taub}+\alpha{\ub}_{\taub}=\hb\quad\!\!\!\mbox{on}\quad\!\!\!{\Gamma}$ 
            (\cite{AM3},\cite{TA1})
%  \item (B20)  
\end{itemize}
\begin{rem}
 Here we have not mentioned the regularity of the domains in detail. However, information related to each boundary condition can be traced from the given references. For proposing the numerical schemes, we assume that the boundary of computational domains have the necessary regularity.
\end{rem}
\subsection{Motivations and physical applications }
Here we discuss physical applications of some of the boundary conditions.
 Stokes equations with boundary conditions of type (B1) are encountered in many physical problems such as a tank closed with a membrane on a part of the boundary, coupling with different equations such as Darcy's equations in the case of fluid domain is with a crack in a porus medium 
(\cite{BE6},\cite{BE7}). Use of (B4) in Stokes equations help to design variational formulations without $p$ term. The boundary conditions arise in fluid dynamics related applications, electromagnetic field applications, decomposition of vector fields 
(\cite{BE8}, \cite{BE9}, \cite{DO1}, \cite{GI1},\cite{VE3}).
Stokes equations with boundary conditions on tangential component of velocity and pressure boundary conditions (B5) are encountered in many physical applications such as Stokes equations in pipelines \cite{CO1} and blood vessels 
(\cite{VI1},\cite{VI2},\cite{TO2},\cite{OH1},\cite{GO1}). Boundary conditions of type (B6) are encountered in case of flow network in pipes, obstacles in pipes, modeling boundary conditions for infinite flow around obstacle etc. \cite{CO2}. Stokes equations with boundary conditions of type (B7) are used in understanding the coupling problem of Stokes and Darcy equations, which has immense importance in geophysical systems. The difficulty arises because of the large difference in scales between the porous media and the cracks where the flow is quite faster. Apart from cracks, such coupling problems have applications in seepage of water in sand e.g. from lake to ground, from sea to sands in the beach. Stokes equations with boundary conditions ((B2), (B3), (B16), (B18)) are called  slip boundary conditions \cite{VE2} and are used in modelling of flow 
problems with free boundaries ( e.g. coating problem \cite{SA2}), flows past chemically reacting walls and flow problems where no-slip boundary conditions
are not valid, incompressible viscous flows with high angles of attack and high Reynold's numbers. In the case of free surface problems \cite{SI1}, use of no-slip condition on the fixed part of the boundary leads to stress singularities. This is regarded as a case of nonphysical singularity and disappears with use of slip boundary conditions near the solid free surface contact point. Stokes equations with boundary conditions of type (B9) can be treated as generalization of Poiseuille flow in a channel, where $\Gamma_1$ represents the rigid wall and inflow/outflow takes place through $\Gamma_2$. Boundary conditions of type (B14) refers to fluid under investigation does not slip at the boundary and penetration of fluid through the boundary is controlled by the normal component of stress.
\subsection{Theoretical developments on Stokes equations with non-standard boundary conditions}
Next, we briefly discuss existing theoretical results on  Stokes equations with various
non-standard boundary conditions. These results help us in formulating numerical schemes in appropriate 
functional spaces.
Medkova \cite{ME1} has studied Stokes equations with three different boundary conditions (B2), (B3), (B4)
named as of Navier type on two dimensional bounded domains with Lipschitz boundary.
 Necessary and sufficient conditions for the existence of solutions on planar domains
in Sobolev spaces and Besov spaces have been obtained.
Amrouche et. al. \cite{AM2} have studied Stokes equations with boundary conditions (B4),(B5)
on three dimensional $C^{1,1}$ bounded multiply connected domains. Existence, uniqueness results of weak, strong and
very weak solutions have been provided. Amrouche et. al. \cite{AM3} have theoretically studied Stokes equations with 
boundary conditions (B18) with zero boundary data. Existence, uniqueness and regularity estimates on different Sobolev spaces have been provided.
Abboud et. al. (\cite{AB1},\cite{AB2}) have studied Stokes equations with boundary conditions (B4) and (B5)
with homogeneous boundary data
%\begin{itemize}
% \item (B6) $\ub\times\nb=0,p=0$ on $\Gamma$
% \item (B7) $\ub.\nb=0$, curl$\ub\times\nb=0$
%\end{itemize}
on three dimensional bounded, simply connected domains. Proposed variational formulation decouples the original problem into a system of
velocity and a Poisson problem for pressure. A priori and a posteriori estimates have been obtained and confirmed with numerical results.
A finite element formulation has been used for theoretical and numerical results.
Bernard \cite{BE1} has analyzed Stokes equations and Navier-Stokes equations with boundary conditions
(B6) for three dimensional bounded domains with boundary  of class $C^{1,1}.$ Existence, uniqueness, and regularity estimates
have been obtained in various Sobolev spaces. Results obtained are applicable for two dimensional domains too, with suitable modification.
Acvedo et. al. \cite{AC1} have studied Stokes operator  with boundary conditions (B7)
on three dimensional bounded $C^{1,1}$ domains. Existence, uniqueness and regularity results
have been obtained in different Sobolev spaces.
Bramble and Lee \cite{BR1} have investigated Stokes equations with boundary conditions (B4)
on three dimensional bounded domains. The existence, uniqueness results and a general shift theorem have been
provided. Existence and uniqueness results of variational formulation for Stokes equations with boundary
condition (B6) have been discussed in \cite{CO2}. Medkova \cite{ME2} has discussed solvability issues of the Stokes
system with (B17) in Sobolev and Besov spaces.
\subsection{Computational developments on Stokes equations with non-standard boundary conditions}
A spectral approximation of Stokes equations with boundary condition (B9)
has been investigated in \cite{BE2}. A stabilized hybrid discontinuous Galerkin approach using finite elements
has been used for Stokes equations with boundary conditions (B10) \cite{BA1}.
Bernardi and Chorfi \cite{BE3} has discussed a spectral formulation for vorticity based first order
formulation of Stokes equations with boundary condition (B8).
Numerical results suppporting
theoretical estimates have been presented. Bernard \cite{BE4} has presented a spectral discretization
(without numerical results) for Stokes equations with boundary conditions (B12).
%where $\Gamma_{1}=\left\{(x,y):-1<x<1,y=\pm 1\right\}, \Gamma_{2}=\left\{(x,y):x=-1,-1\right.$\\$\left.<y<1\right\}, \Gamma_{3}=\left\{(x,y):x=1,-1<y<1\right\}.$
Amara et. al. \cite{AM4} have discussed vorticity-velocity-pressure formulation for Stokes problem with 
boundary conditions (B13)
where $w=\nabla\times\ub, \Gamma={\overline{\Gamma}}_{1}\cup{\overline{\Gamma}}_{2}\cup{\overline{\Gamma}}_{3}$ on
two dimensional domains with assumptions that there is no non-convex corner at the intersection of 
${\overline{\Gamma}}_{1}\cup{\overline{\Gamma}}_{2}$ and ${\overline{\Gamma}}_{3}$. Conca et. al. \cite{CO1} have
presented a variational formulation for Stokes equations with boundary condition (B6). Authors have extended results
for Navier-Stokes equations. Optimal, a priori error estimates for 
Stokes equations with boundary conditions (B4) and (B5) with homogeneous data on three dimensional domains have been presented 
in \cite{AB1}. Hughes \cite{HU1}
has presented convergent symmetric finite element formulations for Stokes equations with (B13). Authors \cite{BE5} have discussed
Stokes equations with pressure boundary condition, (B5)  on two and three dimensional domains. Convergence analysis and numerical simulations have been presented. A spectral discretization for vorticity based 
first order formulation of Stokes equations with (B4) has been discussed in \cite{AM7}.
\begin{rem}
Development of computational aspects of Stokes equations with various non-standard boundary conditions is displayed in Table 1. Following table says no unified approach exists in the literature to accomodate so many boundary conditions. Also least-squares formulations have not been used. Hence least-squares approach to deal with various non-standard boundary condtions in a unified framework is the key achievement of this paper.
\begin{table}[H]
~~~~~~~~~~~~~~~~~~~~~~~~~~~%
\begin{center}
\begin{tabular}{l ccccccc}
\hline 
B.C.        & Method              & Numerical results & 2D/3D     & Extn to NS  & Ref\\
\hline 
(B4)/(B5)   &  Finite element     & Yes               & 3D        & No         &  \cite{AB1}\\
(B4)        &  Spectral Galerkin  & Yes               & 2D$\&$ 3D & No         &  \cite{AM7}\\
(B5)        & Finite element      & Yes               & 2D$\&$ 3D & No         &  \cite{BE5}\\
(B6)        & Finite element      & Yes               & 2D$\&$ 3D & Yes        &  \cite{CO1}\\
(B8)        & Spectral Galerkin   & Yes               & 2D$\&$ 3D & No         &  \cite{BE3}\\
(B9)        & Spectral Galerkin   & No                & 2D        & No         &  \cite{BE2}\\
(B10)       & Stabilized DGFEM    & Yes               & 2D        & No         &  \cite{BA1}\\
(B12)       & Spectral Galerkin   & No                & 2D        & No         &  \cite{BE4}\\
(B13)       & Finite element      & Yes               & 2D        & No         &  \cite{AM4}\\
(B13)       & Finite element      & No                & 2D$\&$ 3D & No         &  \cite{HU1}\\
\hline 
\end{tabular}
\end{center}
\caption{Summary of computational developments}
\end{table}
\end{rem}
%\begin{rem}
% Computational results for various boundary conditions are not found in literaure. Different numerical formulations with different basis functions can be used to check accuracy level in order to come up with competitive numerical solvers.
%\end{rem}
\subsection{Regularity for $(\ub,p)$ in Stokes equations with various non-stanadard boundary conditions}
 We need regularity estimates of $\Hb^{2}$ and $H^{1}$ type for velocity and pressure variable respectively to propose the numerical schemes. 
In this section we discuss regularity estimates for Stokes equations
with different boundary conditions.
\begin{itemize}
\item Regularity for Stokes equations with (B4) \cite{AM2}\\
If $\fb\in\Lb^{2}(\Omega), g\in H^{\frac{3}{2}}(\Gamma),\hb\in\Hb^{\frac{1}{2}}(\Gamma)$, the solution
$(\ub,p)\in\Hb^{2}(\Omega)\times H^{1}(\Omega)$ and satisfies
\begin{align}
\|\ub\|_{\Hb^{2}(\Omega)}+\|p\|_{H^{1}(\Omega)}\leq c\big(\|\fb\|_{\Lb^{2}(\Omega)}+\|g\|_{H^{\frac{3}{2}}(\Gamma)}+\|\hb\times\nb\|_{\Hb^{\frac{1}{2}}(\Gamma)}\big).\notag
\end{align}
\item Regularity for Stokes equations with (B5) \cite{AM2}\\      
  If $\fb\in\Lb^{2}(\Omega), \gb\in \Hb^{\frac{3}{2}}(\Gamma),\tilde{p}\in H^{\frac{1}{2}}(\Gamma)$, the solution
$(\ub,p)\in\Hb^{2}(\Omega)\times H^{1}(\Omega)$ and satisfies
\begin{align}
\|\ub\|_{\Hb^{2}(\Omega)}+\|p\|_{H^{1}(\Omega)}\leq c\big(\|\fb\|_{\Lb^{2}(\Omega)}
+\|\gb\times\nb\|_{\Hb^{\frac{1}{2}}(\Gamma)}\notag
+\|\tilde{p}\|_{H^{\frac{1}{2}}(\Gamma)}\big).\notag
\end{align}  
\item  Regularity for Stokes equations with (B6) \cite{BE1}\\
      If $\fb\in\Lb^{2}(\Omega),\ub_{0}\in\Hb^{\frac{3}{2}}(\Gamma_{1}),
      \ab\in\Hb^{\frac{3}{2}}(\Gamma_{2}), p_{0}\in H^{\frac{1}{2}}(\Gamma_{2}),$ 
      $\bb\in\Hb^{\frac{3}{2}}(\Gamma_{3}), \hb\in\Hb^{\frac{1}{2}}(\Gamma_{3})$
    then Stokes equations with boundary conditions (B6)
    has a solution of the form $(\ub+\wa,p)$ with $\wa\in V_{1}$, where $V_{1}=\left\{\vb\in V : \nabla\times\vb=0\right\}$,  satisfying the regularity estimate
    \begin{align}
     \inf_{\wa\in V_{1}}\|\ub+\wa\|_{\Hb^{2}(\Omega)}+\|p\|_{H^{1}(\Omega)}
     \leq c\big(\|\fb\|_{\Lb^{2}(\Omega)}+\|\ub_{0}\|_{\Hb^{\frac{3}{2}}(\Gamma_{1})}+\|\ab\|_{\Hb^{\frac{3}{2}}(\Gamma_{1})}\big).\notag
    \end{align}
\item  Regularity for Stokes equations with (B7) \cite{AM5}\\
             If $\fb\in\Lb^{p}(\Omega),\chi\in H^{1}(\Omega),g\in H^{\frac{3}{2}}(\Gamma),\hb\in \Hb^{\frac{1}{2}}(\Gamma)$
             satisfy compatibility conditions (Theorem 4.1, \cite{AM5}), 
             then Stokes equations with boundary conditions (B7) has a unique solution
             $(\ub,p)\in (\Hb^{2}(\Omega)\times H^{1}(\Omega))/\mathcal{N}(\Omega)$ satisfying
             \begin{align}
              \|\ub\|_{\Hb^{2}(\Omega)/\mathcal{T}(\Omega)}+\|p\|_{H^{1}(\Omega)/\mathbb{R}}
              \leq c\big(\|\fb\|_{\Lb^{2}(\Omega)}+\|g\|_{H^{\frac{3}{2}}(\Gamma)}+\|\hb\|_{\Hb^{\frac{1}{2}}(\Gamma)}\big),\notag
             \end{align}
             where
             \begin{align}
              &\mathcal{T}(\Omega)=\left\{\ub\in\Hb^{1}(\Omega):\mathbb{D}\ub=0\quad\!\!\!\mbox{in}\quad\!\!\!\Omega,\nabla.\ub=0\quad\!\!\!\mbox{in}\quad\!\!\!\Omega, \ub.\nb=0 \quad\!\!\!\mbox{on}
              \quad\!\!\!\Gamma\right\}\notag\\
              &\mathcal{N}(\Omega)=\left\{(\ub,c): \ub\in\mathcal{T}(\Omega), c\in\mathbb{R}\right\}.\notag
             \end{align}
       %\end{itemize}
%\item  Regularity for Stokes equations with (B8) \cite{BR1}
%If $\partial\Omega$ is of class $C^{r}(r\geq 2)$. 
%If $(\fb,\hb\times\nb,\gb.\nb)\in\Lb^{2}(\Omega)\times\Hb^{\frac{1}{2}(\Gamma)}\times \Hb^{\frac{3}{2}(\Gamma)}$, $\exists$
%a unique solution $(\ub,p)\in \Hb^{2}(\Omega)\times (H^{1}(\Omega)\cup L_{0}^{2}(\Omega))$ satisfying inequality
%\begin{align}
% \|\ub\|_{\Hb^{2}(\Omega)}+\|p\|_{\Hb^{1}(\Omega)}
% \leq c\big(\|\fb\|_{L^{2}(\Omega)}+\|\hb\times\nb\|_{\frac{1}{2},\Gamma}+\|\gb.\nb\|_{\frac{3}{2},\Gamma}\big)\notag
%\end{align}
\item  Regularity for Stokes equations with (B11) \cite{OL1}\\
%Let $\Omega\subset\mathbb{R}^{n}, n=2,3$ with Lipschitz boundary. Assume that either $\partial\Omega$ is of class $C^{2}$
%or the domain is convex.% Using Thm 2 \cite{OL1} and obtained inequalities there in we have :
If $\fb\in\Lb^{2}(\Omega),\chi\in H^{1}(\Omega)\cap L^{2}/(\mathbb{R})$, then Stokes equations with (B11) has a
unique solution $(\ub,p)\in\Hb^{2}(\Omega)\times H^{1}(\Omega)$ satisfying
\begin{align}
 \|\ub\|_{\Hb^{2}(\Omega)}+\|p\|_{H^{1}(\Omega)}\leq c\big(\|\fb\|_{\Lb^{2}(\Omega)}+\|\chi\|_{\Hb^{1}(\Omega)}\big).\notag
\end{align}
\item Regularity for Stokes equations with (B14) \cite{SA1}\\
 For $\fb\in\Lb^{2}(\Omega), \omega_{\nb}\in\Hb^{\frac{1}{2}}(\Gamma)$.
 If $(\ub,p)$ are the weak solutions, then $\ub\in\Hb^{2}(\Omega), p\in H^{1}(\Omega)$ satisfying
 the inequality
 \begin{align}
  \|\ub\|_{\Hb^{2}(\Omega)}+\|p\|_{H^{1}(\Omega)}
  \leq c\big(\|\fb\|_{\Lb^{2}(\Omega)}+\|\omega_{\nb}\|_{\frac{1}{2},\Gamma}\big).\notag
 \end{align}
%Here $C$ depends on the regularity parameter $k$.
\item Regularity for Stokes equations with (B15) \cite{SA1}\\
For $\fb\in\Lb^{2}(\Omega), \omega_{\taub}\in\Hb^{\frac{1}{2}}(\Gamma)$,
% If $(\ub,p)\in K_{n}\times L_{0}^{2}(\Omega)$ are the weak solutions, 
we have $\ub\in\Hb^{2}(\Omega), p\in H^{1}(\Omega)$ satisfying
 the inequality
 \begin{align}
  \|\ub\|_{\Hb^{2}(\Omega)}+\|p\|_{H^{1}(\Omega)}\leq c\big(\|\fb\|_{\Lb^{2}(\Omega)}+\|\omega_{\taub}\|_{\frac{1}{2},\Gamma}\big).\notag
 \end{align}
%Here $C$ depends on the regularity parameter $k$.
%--------------------------------------------
%\item Regularity for Stokes equations with (B16)
%Consider Stokes equations \cite{AM} with boundary condition (B16).
%If $\fb\in \Lb^{p}(\Omega),\gb\in\Wb^{2-\frac{1}{p},p}(\Gamma),p_{0}\in W^{2-\frac{1}{p},p}(\Gamma) $, then the solution $(\ub,p)$ belongs to $\Wb^{2,p}(\Omega)\times W^{1,p}(\Omega)$ and satisfies the
%estimate 
%\begin{align}
% \|\ub\|_{\Wb^{2,p}(\Omega)}+\|p\|_{\Wb^{2,p}(\Omega)}
% \leq C\big(\|\fb\|_{\Lb^{p}(\Omega)}+\|\gb\times\nb\|_{W^{\frac{3}{2},p}}+\|p_{0}\|_{W^{\frac{1}{2},p}(\Gamma)}\big)%\notag
%\end{align}
%\item Regularity for Stokes equations with (B17)
%Consider Stokes equations with boundary conditions (B17).
%We have the following existence, uniqueness and regularity result \cite{BR1}.
% Let $\Omega$ be of class $C^{r},r\geq m+2$. 
% If $(\fb,\hb\times\nb,\gb.\nb)\in\Hb^{m-1}(\Omega)\times \Hb^{m-\frac{1}{2}}(\Gamma)\times H^{m+\frac{1}{2}}(\Gamma)$,
% then $\exists$ a unique solution $(\ub,p)\in\Hb^{m+1}(\Omega)\times (H^{m}(\Omega)\cap L_{0}^{2}(\Omega))$
% satisfying
% \begin{align}
%  \|\ub\|_{m+1,\Omega}+\|p\|_{m,\Omega}\leq
%  c\big(\|\fb\|_{m-1,\Omega}+{\|\hb\times\nb\|}_{m-\frac{1}{2},\Gamma}+{\|\gb\times\nb\|}_{m+\frac{1}{2},\Gamma}\big)\notag
% \end{align}
%------------------------------- 
\item Regularity of Stokes equations with boundary conditions (B18) \cite{TA1}
%------------------------------- 

For $\fb\in \Lb^{2}(\Omega),\hb\in\Hb^{\frac{1}{2}}(\Gamma), (\ub,p)\in\Hb^{2}(\Omega)\times H^{1}(\Omega)$ 
%\begin{itemize}
%\item if $\Omega$ is nonaxisymmetric, then
      \begin{align}
       \|\ub\|_{\Hb^{2}(\Omega)}+\|p\|_{H^{1}(\Omega)}
       \leq c\big(\|\fb\|_{\Lb^{2}(\Omega)}+\|\hb\|_{\Hb^{\frac{1}{2}}(\Gamma)}\big).\notag
      \end{align}
%\item if $\Omega$ is axisymmetric, then
%      \begin{align}
%       \|\ub\|_{\Hb^{2}(\Omega)}+\|p\|_{H^{1}(\Omega)}
%       \leq c(\Omega)\big(\|\fb\|_{\Lb^{2}(\Omega)}+\|\hb\|_{\Hb^{\frac{1}{2}}(\Gamma)}\big)\notag
%      \end{align}
      % where $\betab(\xb)=\bb\times\xb,\xb\in\mathbb{R}^{3}$, when $\Omega$ is axisymmetric w.r.t. a constant vector $\bb\in\mathbb{R}^{3}$.
%\end{itemize}
%\item Regularity of Stokes equations with boundary conditions (B18) \cite{VE1}
%      Assume that $\Gamma$ is of class $C^{2,1}.$ Let $(\ub,p)$ be the weak soluion with  $\lambda$ and the data $\fb,g, \ab,\bb$ satisfying the 
%      conditions $\fb\in(\Hb_{\tau}^{'})^{'},g\in L^{2},\ab\in\Hb^{\frac{1}{2}},\bb\in\Hb^{-\frac{1}{2}},\bb$ is tangential to $\Gamma.$ If $\fb\in\Lb^{2}(\Omega), g\in H^{1}(\Omega),\ab\in\Hb^{\frac{3}{2}},\bb\in\Hb^{\frac{1}{2}}$, then $(\ub,p)\in\Hb^{2}\times H^{1}$. In case
%      $\lambda\geq 0$, we have
%      \begin{align}
%       \|\ub\|_{2}^{2}+(1+\lambda)\|p\|_{1}^{2}\leq 
%       c\big(\|\fb\|^{2}+\|g\|_{1}^{2}+\|\ab\|_{\frac{3}{2}}^{2}+\|\bb\|_{\frac{1}{3}}^{2}\big)\notag
%      \end{align}
%    where $c$ is independent of $\lambda$.
\end{itemize}
%---------------------------------------
\begin{rem}
 Similar regularity estimates are missing in the literature for some of the boundary conditions. However, we assume that boundary
 data of Dirichlet type for velocity is in $\Hb^{\frac{3}{2}}(\Gamma)$ and pressure is in $H^{\frac{1}{2}}(\Gamma)$.
\end{rem}
%---------------------------------------
\section{Stability estimates}
In this section, we prove the stability estimates for Stokes problem with (B4) and (B5). Stability estimates in other boundary cases can be derived in a similar way. 
We denote $\xb=(x_{1},x_{2})$ and $\xb=(x_{1},x_{2},x_{3})$ if 
$\xb\in \Omega\subset\mathbb{R}^{n}, n=2,3$ respectively. $\Omega$ is divided into $L$ number of subdomains. For simplicity, 
$\Omega_{l}$'s are chosen to be rectangles in $\mathbb{R}^{2}$ and cubes in 
$\mathbb{R}^{3}$ respectively.
Let $Q$ denote the master element $Q=(-1,1)^{n}, n=2,3$. Now there is an analytic
map $M_l$ from $\Omega_l$ to $Q$ which has an analytic inverse. The map $M_l$ is of the form
$\hat\xb=M_l({\xb})$, where $\hat{\xb}=(\xi,\eta)$ and $\hat{\xb}=(\xi,\eta,\nu)$ if $\Omega\subset\mathbb{R}^{d}, d=2,3$
respectively.
Define the non-conforming spectral element functions $\ub_l$ and $p_l$ on $Q$ by
\begin{itemize}
 \item {if $\Omega_{l}\subseteq\Omega\subset {\mathbb{R}}^2$}
\begin{align}
\hat{\ub}|_Q(\xi,\eta)=\sum_{i=0}^W \sum_{j=0}^{W} 
\mathbf{a}_{i,j}\:\xi^i\eta^j,\quad \hat{p}|_{Q}(\xi,\eta)=\sum_{i=0}^{W}\sum_{j=0}^{W}b_{i,j}\xi^{i}\eta^{j}.\notag
\end{align}
\item {if $\Omega_{l}\subseteq\Omega\subset {\mathbb{R}}^3$}
\begin{align}
\hat{\ub}|_Q(\xi,\eta,\nu)=\sum_{i=0}^W \sum_{j=0}^W \sum_{k=0}^W \mathbf{a}_{i,j,k}\:\xi^i\eta^j\nu^k,
\quad \hat{p}|_{Q}(\xi,\eta,\nu)=\sum_{i=0}^{W}\sum_{j=0}^{W}\sum_{k=0}^{W}b_{i,j,k}\xi^{i}\eta^{j}\nu^{k}.\notag
\end{align}
\end{itemize}
Let $\Pib^{L,W}=\left\{\left\{\ub_{l}\right\}_{1\leq l\leq L},
\left\{p_{l}\right\}_{1\leq l\leq L}\right\}$  be the space of spectral element functions consisting of the above tensor
products of polynomials of degree $W$. 
%-------------------------------------------------------------------------------------------
%-------------------------------Estimates for divergence term-------------------------------
%-------------------------------------------------------------------------------------------
%\!\!\!\!\!\!\!\!\!\!\!\!\!\!\!\!\!\!

Now let's define jump terms. Let
$\Gamma_{l,i}=\Gamma_{i}\cap\partial \Omega_l$ ($i^{th}$ interelement boundary of the $l^{th}$ element) 
be the image of the mapping $M_l$ corresponding
to $\eta=1$.
Let the face $\Gamma_{l,i}=\Gamma_{m,j}$, where $\Gamma_{m}$ is a edge/face of the element $\Omega_m$ i.e.
$\Gamma_{l}$ is a edge/face common to the elements $\Omega_{l},\Omega_{m}$. We may assume the edge
$\Gamma_{l,i}$ corresponds to $\eta=1$ and $\Gamma_{m,j}$ corresponds to $\eta=-1$. Let $\left.{[\ub]}
\right|_{\Gamma_{l,i}}$ denote the jump in $\ub$ across the edge/face $\Gamma_{l,i}$. 
%\begin{itemize}
If $\Gamma_{l,i}\subset\partial\Omega_{l}\cap\partial\Omega_{m}$ when $\Omega_{l},\Omega_{m}\subset {\mathbb{R}}^{2}$, we define jump
terms as :
\begin{align}
  &\left\Vert [\hat\ub]\right\Vert _{0,\Gamma_{l,i}}^{2}=
  \left\Vert \hat{\ub}_{m}(\xi,-1)-\hat{\ub}_{l}(\xi,1)\right\Vert _{0,\Gamma_{l,i}}^{2}\notag\\
%----------------------------------  
  &\left\Vert [\hat\ub_{x_{k}}]\right\Vert _{\frac{1}{2},\Gamma_{l,i}}^{2}=
  \| ({\hat{\ub}_{m})}_{x_{k}}(\xi,-1)
              -{(\hat{\ub}_{l})}_{x_{k}}(\xi,1)
                \|_{\frac{1}{2},\Gamma_{l,i}}^{2}\notag\\
%----------------------------------
  &\left\Vert [\hat p]\right\Vert _{\frac{1}{2},\Gamma_{l,i}}^{2}=
  \left\Vert\hat{ p}_{m}(\xi,-1)-\hat{p}_{l}(\xi,1)\right\Vert_{\frac{1}{2},\Gamma_{l,i}}^{2}.\notag
\end{align}
Jumps along faces in case of three dimensional domains can be defined in a similar way.
%\item{if $\Omega_{l}\subseteq\Omega$ when $\Omega_{l},\Omega_{m}\subset \mathbb{R}^3$}
%\begin{align}
%\left\Vert[\hat\ub]\right\Vert_{0,\Gamma_{l,i}}^{2}&=\|{\hat{\ub}}_{m}(\xi,\eta,-1)|_{\Gamma_{l,i}}-{\hat{\ub}}_{l}
%(\xi,\eta,1)|_{\Gamma_{l,i}}\|_{0,\Gamma_{l,i}}^{2},\notag\\
%\left\Vert[\hat\ub_{x_{k}}]\right\Vert_{\frac{1}{2},\Gamma_{l,i}}^{2}&=\left\Vert({\hat\ub}_{m})_{x_{k}}(\xi,\eta,-1)|_{\Gamma_{l,i}}
%-({\hat{\ub}}_{l})_{x_{k}}(\xi,\eta,1)|_{\Gamma_{l,i}}\right\Vert_{\frac{1}{2},\Gamma_{l,i}}^{2},\notag\\
%\left\Vert[\hat p]\right\Vert_{\frac{1}{2},\Gamma_{l,i}}^{2}&=\left\Vert{\hat{p}}_{m}(\xi,\eta,-1)|_{\Gamma_{l,i}}-{\hat{p}}_{l}
%(\xi,\eta,1)|_{\Gamma_{l,i}}\right\Vert_{\frac{1}{2},\Gamma_{l,i}}^{2}.\notag
%\end{align}
%\end{itemize}

Let $\ub,p\in\Pib^{L,W}$. 
%--------------------------------------------------------------------------------------------------
%Here, $[\:\cdot\:]$ denotes the jump in the function and its derivatives at the inter element boundaries and
%Since $\Gamma_{l,i}$, corresponding to $\nu=1$, is the image of $Q=(-1,1)^{n}$ , 
%in $\xi,\eta$ coordinates
%\begin{subequations*}%\label{eq3.4}
%\begin{align*}%\label{eq3.4a}
%\|w\|_{\sigma, \Gamma_l}^2 = \|w\|_{0,E}^2 + \int_E\int_E\frac{\left(w(\xi,\eta)-w(\xi^{\prime},
%\eta^{\prime})\right)^2}{\left((\xi-\xi^{\prime})^2+(\eta-\eta^{\prime})^2\right)^{1+\sigma}}
%d\xi\:d\eta\:d\xi^{\prime}\:d\eta^{\prime}
%\end{align*}
%for $0< \sigma < 1$. Here, $E$ denote either $S$ or $T$.
%\newline
Next, we define the quadratic form
\begin{align}\label{precon}
\mathcal U^{L,W}(\ub,p)=\sum_{l=1}^{L}\|\ub_{l}\|_{2,Q}^{2}+\sum_{l=1}^{L}\|p_{l}\|_{1,Q}^{2}.
\end{align}
\subsection{Stability estimate for Stokes equations with boundary condition (B4)}
We now define the quadratic form
%--------------------------------------------------------------------------------------------------
\begin{align}
\mathcal V^{L,W}(\ub,p)&=\sum_{l=1}^{L}\|\mathcal{L}(\ub_l,p_l)\|_{0,Q}^{2}
+\sum_{l=1}^{L}\|\mathcal{D}\ub_l\|_{1,Q}^{2}\notag\\
&+\sum_{\Gamma_{l,i}\subseteq \bar{\Omega}\setminus \Gamma}\left(\|[\ub]\|_{0,\Gamma_{l,i}}^{2}
+\sum_{k=1}^{n}\|[\ub_{x_{k}}]\|_{\frac{1}{2},\Gamma_{l,i}}^{2}+\|[p]\|_{\frac{1}{2},\Gamma_{l,i}}^{2}\right)\notag\\
&+\sum_{\Gamma_{l,i}\subset\Gamma}\|\ub_l\cdot\nb\|_{\frac{3}{2},\Gamma_{l,i}}^{2}
+\sum_{\Gamma_{l,i}\subset\Gamma}\|(\nabla\times\ub_{l})\times\nb\|_{\frac{1}{2},\Gamma_{l,i}}^{2}.\notag
\end{align}
%----------------------------------------------
\begin{thm}\label{thm1}
Consider the Stokes equations (\ref{eq1}). Then for $W$ large enough there exists a constant $c>0$ (independent of $W$) such
that the estimate
\begin{align}\label{sb1.a}
\mathcal U^{L,W}(\ub,p)\leq c(\ln W)^{2}\mathcal V^{L,W}(\ub,p)
\end{align}
holds.
\end{thm}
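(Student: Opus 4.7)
The plan is to reduce \eqref{sb1.a} to the continuous regularity estimate for Stokes with boundary conditions (B4) recalled in Section 2.4. Concretely, from the non-conforming pair $(\ub,p)=(\{\ub_l\},\{p_l\})\in\Pib^{L,W}$ I would construct a conforming pair $(\tilde\ub,\tilde p)\in\Hb^2(\Omega)\times H^1(\Omega)$ that is element-wise close to $(\ub_l,p_l)$, and then apply that estimate. The price for enforcing conformity on spectral-element functions of degree $W$ is the logarithmic factor $(\ln W)^2$, which comes from sharp polynomial trace and lifting inequalities on the reference cube $Q$.

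First I would construct $(\tilde\ub,\tilde p)$ face by face. On an interior face $\Gamma_{l,i}=\Gamma_{m,j}$ the jumps $[\ub]$, $[\ub_{x_k}]$, and $[p]$ are polynomial data of tangential degree $W$. I would define polynomial correctors $\psy_l$ and $\phi_l$ on $\Omega_l$, supported in a thin layer adjacent to $\Gamma_{l,i}$, whose traces and normal-derivative traces absorb exactly one half of the corresponding jumps, and set $\tilde\ub|_{\Omega_l}=\ub_l+\psy_l$, $\tilde p|_{\Omega_l}=p_l+\phi_l$. The standard polynomial lifting inequality on $Q$ gives
\begin{align}
\|\psy_l\|_{2,\Omega_l}^2\leq c(\ln W)\Bigl(\|[\ub]\|_{0,\Gamma_{l,i}}^2+\sum_{k=1}^{n}\|[\ub_{x_k}]\|_{\tfrac12,\Gamma_{l,i}}^2\Bigr),\notag
\end{align}
and the analogous bound $\|\phi_l\|_{1,\Omega_l}^2\leq c(\ln W)\|[p]\|_{\tfrac12,\Gamma_{l,i}}^2$. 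Patching across all interior faces produces $(\tilde\ub,\tilde p)\in\Hb^2(\Omega)\times H^1(\Omega)$, and a direct calculation using $\mathcal L(\tilde\ub,\tilde p)=\mathcal L(\ub_l,p_l)+\mathcal L(\psy_l,\phi_l)$ on each $\Omega_l$ yields
\begin{align}
\|\mathcal L(\tilde\ub,\tilde p)\|_{0,\Omega}^2+\|\mathcal D\tilde\ub\|_{1,\Omega}^2\leq c(\ln W)\,\mathcal V^{L,W}(\ub,p).\notag
\end{align}

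Next I would apply the regularity estimate for Stokes with (B4) to $(\tilde\ub,\tilde p)$. Since no correctors are introduced on the physical boundary, the boundary data of $\tilde\ub$ on $\Gamma$ are literally $\ub_l\cdot\nb$ and $(\nabla\times\ub_l)\times\nb$, whose $H^{3/2}(\Gamma)$ and $\Hb^{1/2}(\Gamma)$ norms are exactly the physical-boundary terms in $\mathcal V^{L,W}$. The (B4) regularity estimate then gives
\begin{align}
\|\tilde\ub\|_{2,\Omega}^2+\|\tilde p\|_{1,\Omega}^2\leq c(\ln W)\,\mathcal V^{L,W}(\ub,p).\notag
\end{align}
Adding back $\sum_{l}(\|\psy_l\|_{2,\Omega_l}^2+\|\phi_l\|_{1,\Omega_l}^2)$ via the lifting bounds, pulling back to $Q$ under the maps $M_l$, and summing over $l$ produces \eqref{sb1.a}, with the second logarithmic factor entering through this final combination step.

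The hard part will be the sharp construction and analysis of the face lifts so that the loss in trace-norm estimates is no worse than $\ln W$: specifically, showing that a polynomial of degree $W$ on $Q$ with prescribed $H^{1/2}$ trace for its normal derivative on one face (and $L^2$ trace for the function itself) admits an $H^2(Q)$ extension whose norm is bounded by the trace norms up to a factor of at most $(\ln W)^{1/2}$. This polynomial lifting estimate, combined with the need to simultaneously respect the vector structure of Stokes (velocity versus pressure, normal versus tangential components) so that each term on the right of $\mathcal V^{L,W}$ is invoked once and only once, is the technical heart of the argument.
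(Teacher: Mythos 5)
Your proposal follows essentially the same route as the paper: both rest on the continuous regularity estimate of Amrouche--Seloula (Theorem 2.3 of \cite{AM}) for the (B4) problem together with polynomial trace/lifting estimates on the reference cube that convert the interelement jumps into a conforming correction at the cost of $(\ln W)^{2}$, the paper merely compressing the correction step into the chain of inequalities it cites from \cite{KI2} and \cite{SM3}. The one imprecision to watch is your claim that the boundary data of $\tilde{\ub}$ contribute \emph{exactly} the physical-boundary terms of $\mathcal V^{L,W}$: the global $H^{3/2}(\Gamma)$ and $\Hb^{1/2}(\Gamma)$ norms are not simply sums of the facewise norms (and the face correctors do perturb traces near edges where interior faces meet $\Gamma$), so passing from the local sums to the global boundary norms again requires the jump control and contributes to the logarithmic factor, as the paper's displayed boundary estimate---which keeps the jump terms on its right-hand side---reflects.
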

\begin{proof}
By Theorem 2.3 of \cite{AM} we have,
\begin{align}
\left\Vert \ub\right\Vert_{\Hb^{2}(\Omega)}^{2}+ \left\Vert p\right\Vert _{H^{1}(\Omega)}^{2}
\leq c\left(\left\Vert \mathcal{L}(\ub,p)\right\Vert_{_{\Lb^{2}(\Omega)}}^{2}
+\left\Vert \mathcal{D}\ub\right\Vert_{{H^{1}(\Omega)}}^{2}
+\left\Vert \ub\cdot\nb\right\Vert_{{H^{\frac{3}{2}}(\Gamma)}}^{2}
+{\|(\nabla\times\ub)\times\nb\|}_{\Hb^{\frac{1}{2}}(\Gamma)}
\right).\notag
\end{align}
We have, 
\begin{align}
\left\Vert \mathcal{L}(\ub,p)\right\Vert _{\Lb^{2}(\Omega)}^{2}
\leq  c\sum_{l=1}^{L}\left\Vert\mathcal{L}_{l}(\ub_{l},p_{l})
\right\Vert_{_{0,Q}}^{2}\quad\!\!\! 
%\end{align}
%--------------
\mbox{and}\quad\!\!\!
%\begin{align}
\left\Vert \mathcal{D}\ub\right\Vert_{H^{1}(\Omega)}^{2}
\leq c\sum_{l=1}^{L}\left\Vert\mathcal{D}_{l}\ub_{l}\right\Vert_{_{1,Q}}^{2}.\notag
\end{align}
%------------------------------------------------------------------------------------
%---------------------Estimates for the boundary terms-------------------------------
%------------------------------------------------------------------------------------
Also we have,
\begin{align}
&{\|\ub\cdot\nb\|}_{\Hb^{\frac{3}{2}}(\Gamma)}^{2}+{\|\nabla\times\ub\times\nb\|}_{\Hb^{\frac{1}{2}}(\Gamma)}^{2}\notag\\
&\leq c\bigg(\left\Vert\ub\right\Vert_{\Hb^{\frac{3}{2}}(\Gamma)}^{2}+\|\frac{\partial\ub}{\partial\nb}\|_{\Hb^{\frac{1}{2}}(\Gamma)}\bigg)\notag\\
&\leq  c(\ln W)^{2}\left(\sum_{\Gamma_{l,i}\subseteq\bar{\Omega}\setminus\Gamma}\left(\left\Vert[\ub]
\right\Vert_{_{0,\Gamma_{l,i}}}^{2}+\sum_{k=1}^{n}\left\Vert[\ub_{x_{k}}]\right\Vert_{_{1/2,\Gamma_{l,i}}}^{2}\right)
+\sum_{\Gamma_{l,i}\subset\Gamma}\left\Vert \ub\right\Vert_{_{\frac{3}{2},\Gamma_{l,i}}}^{2}\right).\notag\\
&\mbox{(Using estimates and techniques from~\cite{KI2}, \cite{SM3})}\notag
\end{align}
%----------------------------------------------------------------------------------------------------------
Combining above estimates, we obtain
\begin{align}
\left\Vert \ub \right\Vert_{\Hb^{2}(\Omega)}^{2}+\left\Vert p \right\Vert_{H^{1}(\Omega)}^{2}
\leq c (\ln W)^{2}\mathcal{V}^{^{L,W}}(\ub,p).\notag
\end{align}
From this (\ref{sb1.a}) follows.
\end{proof}
\subsubsection{Stability estimate for Stokes equations with boundary condition (B5)}
We now define the quadratic form
%--------------------------------------------------------------------------------------------------
\begin{align}
\mathcal V^{L,W}(\ub,p)&=\sum_{l=1}^{L}\|\mathcal{L}(\ub_l,p_l)\|_{0,Q}^{2}
+\sum_{l=1}^{L}\|\mathcal{D}\ub_l\|_{1,Q}^{2}\notag\\
&+\sum_{\Gamma_{l,i}\subseteq \bar{\Omega}\setminus \Gamma}\left(\|[\ub]\|_{0,\Gamma_{l,i}}^{2}
+\sum_{k=1}^{n}\|[\ub_{x_{k}}]\|_{\frac{1}{2},\Gamma_{l,i}}^{2}+\|[p]\|_{\frac{1}{2},\Gamma_{l,i}}^{2}\right)\notag\\
&+\sum_{\Gamma_{l,i}\subset\Gamma}\|\ub_l\times\nb\|_{\frac{3}{2},\Gamma_{l,i}}^{2}
+\sum_{\Gamma_{l,i}\subset\Gamma}\|p_{l}\|_{\frac{1}{2},\Gamma_{l,i}}^{2}\notag
\end{align}
%----------------------------------------------
\begin{thm}\label{thm2}
Consider the Stokes equations (\ref{eq1}). Then for $W$ large enough there exists a constant $c>0$ (independent of $W$) such
that the estimate
\begin{align}\label{sb1}
\mathcal U^{L,W}(\ub,p)\leq c(\ln W)^{2}\mathcal V^{L,W}(\ub,p)
\end{align}
holds.
\end{thm}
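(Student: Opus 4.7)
The plan is to mirror the architecture of the proof of Theorem~\ref{thm1}, replacing only the boundary-related ingredients to match the data prescribed by (B5). The starting point will be a shift estimate for the Stokes system subject to (B5), obtained from the regularity result of \cite{AM2} recalled in the previous section. Concretely, I would apply that regularity estimate to $(\ub,p)$ with data $\fb := \mathcal{L}(\ub,p)$, $\chi := \mathcal{D}\ub$, $\gb\times\nb := (\ub\times\nb)|_{\Gamma}$, and $\tilde p := p|_{\Gamma}$. A strengthened version (allowing $\chi\in H^1$ rather than $L^2$) then yields
\begin{align}
\|\ub\|_{\Hb^{2}(\Omega)}^{2}+\|p\|_{H^{1}(\Omega)}^{2}
\leq c\bigl(\|\mathcal{L}(\ub,p)\|_{\Lb^{2}(\Omega)}^{2}
+\|\mathcal{D}\ub\|_{H^{1}(\Omega)}^{2}
+\|\ub\times\nb\|_{\Hb^{3/2}(\Gamma)}^{2}
+\|p\|_{H^{1/2}(\Gamma)}^{2}\bigr).\notag
\end{align}

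Next, I would decompose the volumetric terms over the spectral elements and pull back to the master cube $Q$ via the analytic maps $M_l$, as in Theorem~\ref{thm1}. Because these maps are analytic with analytic inverses, this gives
\begin{align}
\|\mathcal{L}(\ub,p)\|_{\Lb^{2}(\Omega)}^{2}\leq c\sum_{l=1}^{L}\|\mathcal{L}(\ub_l,p_l)\|_{0,Q}^{2},\quad
\|\mathcal{D}\ub\|_{H^{1}(\Omega)}^{2}\leq c\sum_{l=1}^{L}\|\mathcal{D}\ub_l\|_{1,Q}^{2},\notag
\end{align}
which accounts for the first two groups of terms in $\mathcal V^{L,W}(\ub,p)$. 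The remaining work is then entirely on the boundary trace terms $\|\ub\times\nb\|_{\Hb^{3/2}(\Gamma)}^{2}$ and $\|p\|_{H^{1/2}(\Gamma)}^{2}$.

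For these trace terms I would invoke the logarithmic, non-conforming trace estimates of \cite{KI2}, \cite{SM3}, exactly as in the proof of Theorem~\ref{thm1}. Using $\|\ub\times\nb\|_{\Hb^{3/2}(\Gamma)}^{2}\leq c\bigl(\|\ub\|_{\Hb^{3/2}(\Gamma)}^{2}+\|\partial\ub/\partial\nb\|_{\Hb^{1/2}(\Gamma)}^{2}\bigr)$ together with the standard non-conforming decomposition, the velocity boundary piece is absorbed into the interior jumps $\|[\ub]\|_{0,\Gamma_{l,i}}^{2}$, $\sum_{k}\|[\ub_{x_k}]\|_{1/2,\Gamma_{l,i}}^{2}$ (along interelement interfaces) plus the physical-boundary contribution $\sum_{\Gamma_{l,i}\subset\Gamma}\|\ub_l\times\nb\|_{3/2,\Gamma_{l,i}}^{2}$, at the cost of a factor $(\ln W)^{2}$. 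A parallel argument for the pressure bounds $\|p\|_{H^{1/2}(\Gamma)}^{2}$ by the interior jumps $\|[p]\|_{1/2,\Gamma_{l,i}}^{2}$ plus $\sum_{\Gamma_{l,i}\subset\Gamma}\|p_l\|_{1/2,\Gamma_{l,i}}^{2}$. Substituting these bounds into the shift estimate and rearranging gives (\ref{sb1}).

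The main obstacle is the pressure trace. In Theorem~\ref{thm1} the boundary data involved only $\ub\cdot\nb$ and $(\nabla\times\ub)\times\nb$, so the pressure appeared only through the interelement jumps $\|[p]\|_{1/2,\Gamma_{l,i}}^{2}$ needed to recover $p\in H^{1}(\Omega)$. For (B5) the shift estimate itself demands genuine $H^{1/2}(\Gamma)$ control of $p$, so I must glue together the pressure jumps along $\bar\Omega\setminus\Gamma$ with the new boundary contribution $\|p_l\|_{1/2,\Gamma_{l,i}}^{2}$ on $\Gamma_{l,i}\subset\Gamma$, and verify that the logarithmic trace machinery of \cite{KI2}, \cite{SM3} applies with the single factor $(\ln W)^{2}$ and is compatible with the fractional $1/2$-norm assembled from anisotropic element faces. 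Once this is in place, the rest of the argument is a direct transcription of the proof of Theorem~\ref{thm1}.
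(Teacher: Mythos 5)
Your proposal reproduces the paper's proof essentially step for step: the same shift estimate for the Stokes system with tangential-velocity/pressure boundary data (Theorem 2.3 of \cite{AM}), the same elementwise splitting of $\|\mathcal{L}(\ub,p)\|_{\Lb^{2}(\Omega)}^{2}$ and $\|\mathcal{D}\ub\|_{H^{1}(\Omega)}^{2}$, and the same logarithmic non-conforming trace estimates of \cite{KI2}, \cite{SM3} to absorb $\|\ub\times\nb\|_{\Hb^{3/2}(\Gamma)}^{2}+\|p\|_{H^{1/2}(\Gamma)}^{2}$ into the interelement jumps and the boundary terms of $\mathcal V^{L,W}$. The only (inessential) deviation is your intermediate bound $\|\ub\times\nb\|_{\Hb^{3/2}(\Gamma)}^{2}\leq c\big(\|\ub\|_{\Hb^{3/2}(\Gamma)}^{2}+\|\partial\ub/\partial\nb\|_{\Hb^{1/2}(\Gamma)}^{2}\big)$, which imports the normal-derivative term needed for (B4) but superfluous here --- and not controlled by the (B5) functional --- whereas the paper simply uses $\|\ub\times\nb\|_{\Hb^{3/2}(\Gamma)}^{2}+\|p\|_{H^{1/2}(\Gamma)}^{2}\leq\|\ub\|_{\Hb^{3/2}(\Gamma)}^{2}+\|p\|_{H^{1/2}(\Gamma)}^{2}$, since $\ub\times\nb$ is a zeroth-order trace.
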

\begin{proof}
By Theorem 2.3 of \cite{AM} we have,
\begin{align}
\left\Vert \ub\right\Vert_{\Hb^{2}(\Omega)}^{2}+ \left\Vert p\right\Vert _{H^{1}(\Omega)}^{2}
\leq c\left(\left\Vert \mathcal{L}(\ub,p)\right\Vert_{_{\Lb^{2}(\Omega)}}^{2}
+\left\Vert \mathcal{D}\ub\right\Vert_{{H^{1}(\Omega)}}^{2}
+\left\Vert \ub\times\nb\right\Vert_{{\Hb^{\frac{3}{2}}(\Gamma)}}^{2}
+{\|p\|}_{H^{\frac{1}{2}}(\Gamma)}
\right).\notag
\end{align}
Now,
\begin{align}
\left\Vert \mathcal{L}(\ub,p)\right\Vert _{\Lb^{2}(\Omega)}^{2}
\leq  c\sum_{l=1}^{L}\left\Vert\mathcal{L}_{l}(\ub_{l},p_{l})
\right\Vert_{_{0,Q}}^{2}\notag
\end{align}
%--------------
and
\begin{align}
\left\Vert \mathcal{D}\ub\right\Vert_{H^{1}(\Omega)}^{2}
\leq c\sum_{l=1}^{L}\left\Vert\mathcal{D}_{l}\ub_{l}\right\Vert_{_{1,Q}}^{2}.\notag
\end{align}
%------------------------------------------------------------------------------------
%---------------------Estimates for the boundary terms-------------------------------
%------------------------------------------------------------------------------------
Also we have,
\begin{align}
&{\|\ub\times\nb\|}_{\Hb^{\frac{3}{2}}(\Gamma)}^{2}+{\|p\|}_{H^{\frac{1}{2}}(\Gamma)}^{2}\notag\\
&\leq \left\Vert\ub\right\Vert_{\Hb^{\frac{3}{2}}(\Gamma)}^{2}+\left\Vert p \right\Vert_{H^{\frac{1}{2}}(\Gamma)}^{2}\notag\\
&\leq  c(\ln W)^{2}\left(\sum_{\Gamma_{l,i}\subseteq\bar{\Omega}\setminus\Gamma}\left(\left\Vert[\ub]
\right\Vert_{_{0,\Gamma_{l,i}}}^{2}+\sum_{k=1}^{n}\left\Vert[\ub_{x_{k}}]\right\Vert_{_{1/2,\Gamma_{l,i}}}^{2}\right)
+\sum_{\Gamma_{l,i}\subset\Gamma}\left\Vert \ub\right\Vert_{_{\frac{3}{2},\Gamma_{l,i}}}^{2}
+\left\Vert p\right\Vert_{_{\frac{1}{2},\Gamma_{l,i}}}^{2}\right).\notag\\
&\mbox{(Using estimates and techniques from~\cite{KI2}, \cite{SM3})}\notag
\end{align}
%----------------------------------------------------------------------------------------------------------
Combining above estimates, we obtain
\begin{align}
\left\Vert \ub \right\Vert_{\Hb^{2}(\Omega)}^{2}+\left\Vert p \right\Vert_{H^{1}(\Omega)}^{2}
\leq c(\ln W)^{2}\mathcal{V}^{^{L,W}}(\ub,p).\notag
\end{align}
From this (\ref{sb1}) follows.
\end{proof}
%---------------------------------------
\section{Numerical scheme}
In this section, we describe numerical scheme for each boundary condition. Numerical schemes are based on a non-conforming least-squares formulation. Spectral element functions are allowed to be discontinuous along interelement boundaries. We present the numerical schemes as minimize
${\mathcal{R}}^{L,W}(\ub,p)={\mathcal{R}1}^{L,W}(\ub,p)+{\mathcal{R}2}^{L,W}(\ub,p)$, where
%----------------------------------------
\begin{align}
{\mathcal{R}1}^{L,W}(\ub,p)&=\underbrace{\sum_{l=1}^{L}\|\mathcal{L}(\ub_l,p_l)-\Fb_{l}\|_{0,\Omega_l}^{2}}_{(a)}\notag\\
&+\underbrace{\sum_{l=1}^{L}\|\mathcal{D}\ub_l-\chi_{l}\|_{1,\Omega_l}^{2}}_{(b)}\notag\\
&+\underbrace{\sum_{\Gamma_{l}\subset \bar{\Omega}\setminus \Gamma}\left(\|[\ub]\|_{0,\Gamma_{l}}^{2}
+\sum_{k=1}^{n}\|[\ub_{x_{k}}]\|_{\frac{1}{2},\Gamma_{l}}^{2}+\|[p]\|_{\frac{1}{2},\Gamma_{l}}^{2}\right)}_{(c)}\notag
\end{align}
and remains unchanged for all cases. Here $(a)$ minimizes residual in momentum equations, 
$(b)$ minimizes residual in continuity equation and $(c)$ minimizes jump in unknowns and their derivatives across the interelement
boundaries.
%---------------------------------------
\begin{itemize}
 \item ${\mathcal{R}2}^{L,W}(\ub,p)$ for boundary condition (B1)
\begin{align}
{\mathcal{R}2}^{L,W}(\ub,p)&=\sum_{\Gamma_{l}\subseteq \Gamma_{1}\cap\Gamma}\|\ub_{l}-\gb_{l}\|_{\frac{3}{2},\Gamma_{l}}^{2}
+\sum_{\Gamma_{l}\subseteq \Gamma_{2}\cap\Gamma}\|\ub_{l}.\nb-\gb_{\nb}\|_{\frac{3}{2},\Gamma_{l}}^{2}\notag\\
&+\sum_{\Gamma_{l}\subseteq \Gamma_{2} \cap \Gamma}{\|(\nabla\times\ub_{l})\times\nb-\kb\times\nb\|}_{\frac{1}{2},\Gamma_{l}}^{2}.\notag
\end{align}
%-----------------------
\item ${\mathcal{R}2}^{L,W}(\ub,p)$ for boundary condition (B2)
%-----------------------
\begin{align}
{\mathcal{R}2}^{L,W}(\ub,p)=
\sum_{\Gamma_{l}\subset\Gamma}\|(\ub_l)_{\nb}-(\gb_{l})_{\nb}\|_{\frac{3}{2},\Gamma_{l}}^{2}
+\sum_{\Gamma_{l}\subset\Gamma}
{\left\Vert\bigg[\frac{\partial\ub_{l}}{\partial\nb}-p_{l}\nb+b\ub_{l}\bigg]_{\tau}-(\hb_{l})_{\taub}\right\Vert}_{\frac{1}{2},\Gamma_{l}}^{2}.\notag
\end{align}
%-----------------------
\item ${\mathcal{R}2}^{L,W}(\ub,p)$ for boundary condition (B3)
%-----------------------
\begin{align}
{\mathcal{R}2}^{L,W}(\ub,p)=\sum_{\Gamma_{l}\subset\Gamma}\|(\ub_l)_{\nb}-(\gb_{l})_{\nb}\|_{\frac{3}{2},\Gamma_{l}}^{2}
+\sum_{\Gamma_{l}\subset\Gamma}
{\left\Vert\big([\nabla\ub_{l}+(\nabla\ub_{l})^{T}-pI]\nb+b\ub_{l}\big)_{\taub}-(\hb_{l})_{\taub}\right\Vert}_{\frac{1}{2},\Gamma_{l}}^{2}.\notag
\end{align}
\item ${\mathcal{R}2}^{L,W}(\ub,p)$ for boundary condition (B4)
\begin{align}
{\mathcal{R}2}^{L,W}(\ub,p)=
\sum_{\Gamma_{l}\subset\Gamma}\|\ub_l.\nb-\gb_{l}\|_{\frac{3}{2},\Gamma_{l}}^{2}
+\sum_{\Gamma_{l}\subset\Gamma}
{\left\Vert(\nabla\times\ub_{l}\times\nb)-\hb_{l}\times\nb\right\Vert}_{\frac{1}{2},\Gamma_{l}}^{2}.\notag
\end{align}
\item ${\mathcal{R}2}^{L,W}(\ub,p)$ for boundary condition (B5)
\begin{align}
{\mathcal{R}2}^{L,W}(\ub,p)=
\sum_{\Gamma_{l}\subset\Gamma}\|(\ub_l\times\nb)-(\gb_{l}\times\nb)\|_{\frac{3}{2},\Gamma_{l}}^{2}
+\sum_{\Gamma_{l}\subset\Gamma}
{\left\Vert(p_{l}-\tilde{p}_{l})\right\Vert}_{\frac{1}{2},\Gamma_{l}}^{2}.\notag
\end{align}
\item ${\mathcal{R}2}^{L,W}(\ub,p)$ for boundary condition (B6)
\begin{align}
{\mathcal{R}2}^{L,W}(\ub,p)&=
\sum_{\Gamma_{l}\subseteq \Gamma_{1}\cap\Gamma}\|\ub_l-\ub_{0}\|_{\frac{3}{2},\Gamma_{l}}^{2}
+\sum_{\Gamma_{l}\subseteq \Gamma_{2}\cap\Gamma}\|\ub_{l}\times\nb-\ab_{l}\times\nb\|_{\frac{3}{2},\Gamma_{l}}^{2}\notag\\
&+\sum_{\Gamma_{l}\subseteq \Gamma_{2}\cap\Gamma}\|p_{l}-p_{0}\|_{\frac{1}{2},\Gamma_{l}}^{2}
+\sum_{\Gamma_{l}\subseteq \Gamma_{3}\cap\Gamma}\|\ub_{l}.\nb-\bb_{l}.\nb\|_{\frac{3}{2},\Gamma_{l}}^{2}\notag\\
&+\sum_{\Gamma_{l}\subseteq \Gamma_{3} \cap \Gamma}\!\!
{\|(\nabla\times{\ub_{l}}\times\nb_{l})-\hb_{l}\times\nb\|}_{\frac{1}{2},\Gamma_{l}}^{2}.\notag
\end{align}
\item ${\mathcal{R}2}^{L,W}(\ub,p)$ for boundary condition (B7)
\begin{align}
{\mathcal{R}2}^{L,W}(\ub,p)=
\sum_{\Gamma_{l}\subset \Gamma}\!\!\!\|\ub_l.\nb-g_{l}\|_{\frac{3}{2},\Gamma_{l}}^{2}
+\!\!\!\sum_{\Gamma_{l}\subset \Gamma}\!\!\!{\|[(2\mathbb{D}\ub_{l})\nb]_{\taub}-\hb_{l}\|}_{\frac{1}{2},\Gamma_{l}}^{2}.\notag
\end{align}
\item ${\mathcal{R}2}^{L,W}(\ub,p)$ for boundary condition (B8)
\begin{align}
{\mathcal{R}2}^{L,W}(\ub,p)
=\sum_{\Gamma_{l}\subset\Gamma}\|\ub_l.\nb\|_{\frac{3}{2},\Gamma_{l}}^{2}
+\sum_{\Gamma_{l}\subset\Gamma}
{\left\Vert(\nabla\times\ub_{l})_{\taub})\right\Vert}_{\frac{1}{2},\Gamma_{l}}^{2}.\notag
\end{align}
\item  ${\mathcal{R}2}^{L,W}(\ub,p)$ for boundary condition (B9)
\begin{align}
{\mathcal{R}2}^{L,W}(\ub,p)&=
\sum_{\Gamma_{l}\subseteq \Gamma_{1}\cap\Gamma}\|\ub_{l}-\gb_{l}\|_{\frac{3}{2},\Gamma_{l}}^{2}
+\sum_{\Gamma_{l}\subseteq \Gamma_{2}\cap\Gamma}\|p_{l}-\phi_{l}\|_{\frac{1}{2},\Gamma_{l}}^{2}
+\sum_{\Gamma_{l}\subseteq \Gamma_{2}\cap\Gamma}\|\ub_{l}\times\nb-\gb_{l}\|_{\frac{3}{2},\Gamma_{l}}^{2}.\notag
\end{align}
\item ${\mathcal{R}2}^{L,W}(\ub,p)$ for boundary condition (B10)
\begin{align}
{\mathcal{R}2}^{L,W}(\ub,p)=
\sum_{\Gamma_{l}\subset \Gamma }
\|((\nu\nabla\ub_{l}-p_{l}I)\nb).\nb-\gb_{l}\|_{\frac{1}{2},\Gamma_{l}}^{2}
+\sum_{\Gamma_{l}\subset \Gamma}\|{(\ub_{l})}_{\taub}\|_{\frac{3}{2},\Gamma_{l}}^{2}.\notag
\end{align}
\item ${\mathcal{R}2}^{L,W}(\ub,p)$ for boundary condition (B11)
\begin{align}
{\mathcal{R}2}^{L,W}(\ub,p)=\sum_{\Gamma_{l}\subset\Gamma}
\|\ub_{l}.\nb\|_{\frac{3}{2},\Gamma_{l}}^{2}
+\sum_{\Gamma_{l}\subset \Gamma}
\|\nabla\times\ub_{l}\|_{\frac{1}{2},\Gamma_{l}}^{2}.\notag
\end{align}
\item ${\mathcal{R}2}^{L,W}(\ub,p)$ for boundary condition (B12)
\begin{align}
{\mathcal{R}2}^{L,W}(\ub,p)&=\sum_{\Gamma_{l}\subseteq \Gamma_{1}\cap\Gamma}\|\ub_l-\ub_{0}\|_{\frac{3}{2},\Gamma_{l}}^{2}
+\sum_{\Gamma_{l}\subseteq \Gamma_{2}\cap\Gamma}\|\ub_{l}\times\nb-u_{02}\|_{\frac{3}{2},\Gamma_{l}}^{2}\notag\\
&+\sum_{\Gamma_{l}\subseteq \Gamma_{2}\cap\Gamma}\|p_{l}-\phi_{l}\|_{\frac{1}{2},\Gamma_{l}}^{2}
+\sum_{\Gamma_{l}\subseteq \Gamma_{3}\cap\Gamma}\|\ub_{l}.\nb-u_{01}\|_{\frac{3}{2},\Gamma_{l}}^{2}\notag\\
&+\sum_{\Gamma_{l}\subseteq \Gamma_{3} \cap \Gamma}{\|(\nabla\times{\ub_{l}}.\taub)-\hb\|}_{\frac{1}{2},\Gamma_{l}}^{2}.\notag
\end{align}
\item ${\mathcal{R}2}^{L,W}(\ub,p)$ for boundary condition (B13)
\begin{align}
{\mathcal{R}2}^{L,W}(\ub,p)
&=\sum_{\Gamma_{l}\subseteq \Gamma_{1}\cap\Gamma}\|\ub_{l}-\gb_{l}\|_{\frac{3}{2},\Gamma_{l}}^{2}\notag\\
&+\sum_{\Gamma_{l}\subseteq \Gamma_{2}\cap\Gamma}\|\nb.\ub_{l}-\gb_{\nb}\|_{\frac{3}{2},\Gamma_{l}}^{2}
+\sum_{\Gamma_{l}\subseteq \Gamma_{2}\cap\Gamma}\|\nb\times(\nabla\times\ub_{l})\times\nb-\omegab_{s}\|_{\frac{1}{2},\Gamma_{l}}^{2}\notag\\
&+\sum_{\Gamma_{l}\subseteq \Gamma_{3}\cap\Gamma}\|p_{l}-\psi_{l}\|_{\frac{1}{2},\Gamma_{l}}^{2}
+\sum_{\Gamma_{l}\subseteq \Gamma_{3}\cap\Gamma}\|\nb\times(\ub_{l}\times\nb)-\gb_{s}\|_{\frac{3}{2},\Gamma_{l}}^{2}\notag\\
&+\sum_{\Gamma_{l}\subseteq \Gamma_{4}\cap\Gamma}\|p_{l}-\psi_{l}\|_{\frac{1}{2},\Gamma_{l}}^{2}
++\sum_{\Gamma_{l}\subseteq \Gamma_{4}\cap\Gamma}\|\nb\times(\nabla\times\ub_{l})\times\nb-\gb_{s}\|_{\frac{1}{2},\Gamma_{l}}^{2}.\notag
\end{align}
%\begin{align}
%{R2}^{L,W}(\ub,p)&=
%\sum_{\Gamma_{l}\subseteq \Gamma_{1}\cap\Gamma}\|\ub_l.\nb-\ub_{0}\|_{\frac{3}{2},\Gamma_{l}}^{2}
%+\sum_{\Gamma_{l}\subseteq \Gamma_{1}\cap\Gamma}\|\ub_{l}.\tb-\ub_{0}.\tb)\|_{\frac{3}{2},\Gamma_{l}}^{2}\notag\\
%&+\sum_{\Gamma_{l}\subseteq \Gamma_{2}\cap\Gamma}\|\ub_{l}.\tb-\ab_{l}.\tb)\|_{\frac{3}{2},\Gamma_{l}}^{2}
%+\sum_{\Gamma_{l}\subseteq \Gamma_{2}\cap\Gamma}\|p_{l}-\phi_{l})\|_{\frac{1}{2},\Gamma_{l}}^{2}\notag\\
%&+\sum_{\Gamma_{l}\subseteq \Gamma_{3}\cap\Gamma}\|\ub_{l}.\nb-\bb_{l}.\nb\|_{\frac{3}{2},\Gamma_{l}}^{2}
%+\sum_{\Gamma_{l}\subseteq \Gamma_{3} \cap\Gamma}{\|\omega_{l}-(\omega_{0})_{l}\|}_{\frac{1}{2},\Gamma_{l}}^{2}\notag
%\end{align}
\item ${\mathcal{R}2}^{L,W}(\ub,p)$ for boundary condition (B14)
\begin{align}
{\mathcal{R}2}^{L,W}(\ub,p)=
\sum_{\Gamma_{l}\subseteq \Gamma_{0}\cap\Gamma}\|\ub_l\|_{\frac{3}{2},\Gamma_{l}}^{2}
+\sum_{\Gamma_{l}\subseteq \Gamma_{1} \cap \Gamma}
{\|{\ub_{l}}_{\taub}\|}_{\frac{3}{2},\Gamma_{l}}^{2}
+\sum_{\Gamma_{l}\subseteq \Gamma_{1} \cap \Gamma}
{\|\sigma_{\nb}-\omega_{\nb}\|}_{\frac{1}{2},\Gamma_{l}}^{2}.\notag
\end{align} 
\item ${\mathcal{R}2}^{L,W}(\ub,p)$ for boundary condition (B15)
\begin{align}
{\mathcal{R}2}^{L,W}(\ub,p)
=\sum_{\Gamma_{l}\subseteq \Gamma_{0}\cap\Gamma}\|\ub_l\|_{\frac{3}{2},\Gamma_{l}}^{2}
+\sum_{\Gamma_{l}\subseteq \Gamma_{1} \cap \Gamma}
{\|\ub_{\nb}\|}_{\frac{3}{2},\Gamma_{l}}^{2}
+\sum_{\Gamma_{l}\subseteq \Gamma_{1} \cap \Gamma}
{\|\sigma_{\taub}-\omega_{\taub}\|}_{\frac{1}{2},\Gamma_{l}}^{2}.\notag
\end{align} 
\item ${\mathcal{R}2}^{L,W}(\ub,p)$ for boundary condition (B16)
\begin{align}
{\mathcal{R}2}^{L,W}(\ub,p)&=
\sum_{\Gamma_{l}\subset \Gamma} \|\ub_l.\nb-\gb_{l}.\nb\|_{\frac{3}{2},\Gamma_{l}}^{2}
+\sum_{\Gamma_{l}\subseteq \Gamma}
\|{\nu(\ub_{l})_{\taub}+[\sigma(\ub_{l},p).\nb]}_{\taub}-\sbb\|_{\frac{1}{2},\Gamma_{l}}^{2}.\notag
\end{align} 
\item ${\mathcal{R}2}^{L,W}(\ub,p)$ for boundary condition (B17)
\begin{align}
{\mathcal{R}2}^{L,W}(\ub,p)
&=\sum_{\Gamma_{l}\subset \Gamma}\|\ub_l.\taub-g\|_{\frac{3}{2},\Gamma_{l}}^{2}
+\sum_{\Gamma_{l}\subset \Gamma}\|p_{l}-h_{l}\|_{\frac{1}{2},\Gamma_{l}}^{2}.\notag
\end{align}
\item ${\mathcal{R}2}^{L,W}(\ub,p)$ for boundary condition (B18)
\begin{align}
{\mathcal{R}2}^{L,W}(\ub,p)
&=\sum_{\Gamma_{l}\subseteq \Gamma}\|\ub_l.\nb-g\|_{\frac{3}{2},\Gamma_{l}}^{2}
+\sum_{\Gamma_{l}\subseteq \Gamma}\|[(2\mathbb{D}\ub_{l})\nb]_{\taub}+\alpha(\ub_{l})_{\taub}-\hb_{l}\|_{\frac{1}{2},\Gamma_{l}}^{2}.\notag
\end{align}
\end{itemize}
%--------------------------------------
\begin{rem}
 The general idea behind proposing these above schemes for various boundary conditions is based on the construction of corresponding norm-equivalent least-squares functional which is similar to the idea behind formulations used in \cite{SM3}, \cite{SM1}, \cite{SM2}. The norms are chosen based on norms used in available regularity estimates in the literature.
\end{rem}
\begin{rem}
 Some of the least-squares formulations (\cite{CA1}, \cite{PR1}) use weights to continuity equation related term in the least squares functional to enhance mass conservation. However, we don't use any extra weights in the least squares functionals. Use of weights may lead to ill-conditioning. In the numerical results section, mass conservation for each case has been displayed.
\end{rem}
%=========================================================================
\section{Error estimates}
In this section, we present error estimates for two dimensional domains. The estimates for three dimensional case follows exactly in a similar way.
\begin{thm}\label{app312}
 Let $\ub_{l}(\xi,\eta)=\ub(M_{l}(\xi,\eta))$, $p_{l}(\xi,\eta)=p(M_{l}(\xi,\eta))$,
 for $(\xi,\eta)\in Q$.
 Then there exist positive constants $c$ and $b$ (both are independent of $W$) such that for $W$ large enough the estimate
\begin{align}
\sum_{l=1}^{L}{\|\zb_{l}-\ub_{l}\|}_{2,Q}^{2}
+\sum_{l=1}^{L}{\|q_{l}-p_{l}\|}_{1,Q}^{2}\leq ce^{-bW}\notag
\end{align} holds.
\end{thm}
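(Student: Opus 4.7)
The proof plan combines the stability estimate of Theorem 3.1 (and its analogues for the other boundary cases) with an \emph{a priori} polynomial approximation that uses the analyticity of $(\ub,p)$ coming from the regularity estimates of Section 2.4. Let $(\zb,q)\in\Pib^{L,W}$ denote the minimizer of ${\mathcal R}^{L,W}(\cdot,\cdot)$ defined in Section 4. I will carry out the argument for boundary condition (B4); the other cases are identical in structure, only the ${\mathcal R}2^{L,W}$ boundary terms change, and each such term is already norm-equivalent to a corresponding term of ${\mathcal V}^{L,W}$.

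The first step is to construct a \emph{good} polynomial approximant $(\tilde\ub_l,\tilde p_l)$ of $(\ub_l,p_l)$ on the reference cube $Q$, lying in the tensor-product polynomial space of degree $W$. Because the data are analytic and the regularity results of Section 2.4 give $\ub\in\Hb^2(\Omega), p\in H^1(\Omega)$ with further Sobolev regularity inherited from the data, one can use standard Jacobi-type spectral approximation bounds on $Q$ (e.g.\ of the kind used in \cite{SM3,KI2}) to produce $(\tilde\ub_l,\tilde p_l)$ with
\begin{align}
\sum_{l=1}^L \|\ub_l-\tilde\ub_l\|_{2,Q}^2 + \sum_{l=1}^L \|p_l-\tilde p_l\|_{1,Q}^2 \;\le\; c\,e^{-bW}.\notag
\end{align}
The same approximant controls the boundary-trace quantities appearing in ${\mathcal V}^{L,W}$ in the fractional norms $\|\cdot\|_{3/2,\Gamma_{l,i}}$ and $\|\cdot\|_{1/2,\Gamma_{l,i}}$, again with an $e^{-bW}$ bound, by the trace inequality composed with the $\Hb^2$ approximation estimate.

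The second step is to apply Theorem 3.1 to the polynomial pair $(\zb-\tilde\ub,q-\tilde p)\in\Pib^{L,W}$:
\begin{align}
{\mathcal U}^{L,W}(\zb-\tilde\ub,q-\tilde p)\;\le\; c(\ln W)^2\,{\mathcal V}^{L,W}(\zb-\tilde\ub,q-\tilde p).\notag
\end{align}
Splitting $\zb-\tilde\ub=(\zb-\ub)+(\ub-\tilde\ub)$ and using the triangle inequality term-by-term inside ${\mathcal V}^{L,W}$, it suffices to bound ${\mathcal V}^{L,W}(\zb-\ub,q-p)$ and ${\mathcal V}^{L,W}(\ub-\tilde\ub,p-\tilde p)$ separately. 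The second piece is exponentially small by the approximation estimate from step one. For the first piece, exploit the consistency of the exact solution: $\mathcal L(\ub,p)=\fb$, $\mathcal D\ub=\chi$, the jumps of $(\ub,p)$ across interelement faces vanish since $\ub,p$ are globally smooth, and $\ub\cdot\nb=g$, $(\nabla\times\ub)\times\nb=\hb\times\nb$ on $\Gamma$. Therefore each term of ${\mathcal V}^{L,W}(\zb-\ub,q-p)$ equals the corresponding residual term of ${\mathcal R}^{L,W}(\zb,q)$, so ${\mathcal V}^{L,W}(\zb-\ub,q-p)\le c\,{\mathcal R}^{L,W}(\zb,q)$.

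The third step uses optimality: ${\mathcal R}^{L,W}(\zb,q)\le{\mathcal R}^{L,W}(\tilde\ub,\tilde p)$. Invoking once more consistency of $(\ub,p)$, every term of ${\mathcal R}^{L,W}(\tilde\ub,\tilde p)$ is a residual of $(\tilde\ub-\ub,\tilde p-p)$ evaluated by $\mathcal L$, $\mathcal D$, interelement jumps or boundary operators, and the spectral approximation bounds from step one give ${\mathcal R}^{L,W}(\tilde\ub,\tilde p)\le c\,e^{-bW}$. Assembling the inequalities,
\begin{align}
\sum_{l=1}^L \|\zb_l-\tilde\ub_l\|_{2,Q}^2+\sum_{l=1}^L \|q_l-\tilde p_l\|_{1,Q}^2 \;\le\; c(\ln W)^2 e^{-bW},\notag
\end{align}
and a final triangle inequality with the approximation estimate yields the claimed $ce^{-bW}$ bound after absorbing the $(\ln W)^2$ factor into a slightly smaller $b$.

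The main obstacle is the third step: verifying that every boundary term of ${\mathcal R}2^{L,W}$ evaluated at $(\tilde\ub,\tilde p)$ can indeed be controlled by the $\Hb^2\times H^1$ approximation error on $\Omega$, uniformly across all eighteen boundary conditions and in the fractional norms $\|\cdot\|_{3/2,\Gamma_{l,i}},\|\cdot\|_{1/2,\Gamma_{l,i}}$. This reduces to showing that every boundary operator appearing in the list (trace, normal trace, tangential trace, $(\nabla\times\cdot)\times\nb$, $(2\mathbb D\cdot)\nb$, the Navier and slip combinations, etc.) is continuous from $\Hb^2(\Omega)\times H^1(\Omega)$ into the chosen trace norm; this is standard but must be checked case by case, and is the reason Remark 4.1 emphasises that the norms in the functional are dictated by the regularity estimates of Section 2.4.
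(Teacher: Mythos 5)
Your proposal is correct and follows essentially the same route as the paper: construct an exponentially accurate polynomial approximant of the analytic solution, use trace inequalities to show the least-squares functional of that approximant is exponentially small, invoke optimality of the minimizer, apply the stability estimate (Theorems \ref{thm1}, \ref{thm2}) to the difference of the two polynomial tuples, and finish with a triangle inequality, absorbing the $(\ln W)^{2}$ factor into the exponential. The only cosmetic difference is that you route the triangle inequality through the exact solution $(\ub,p)$ rather than comparing the minimizer directly to the polynomial approximant, which changes nothing of substance.
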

\textbf{Proof :}
 From \cite{CH}, we can have a polynomial $\Phi(\xi,\eta)$ of degree $W$ in each variable separately such that
 \begin{align}
 {\|v(\xi,\eta)-\Phi(\xi,\eta)\|}_{n,S}^{2}\leq c_{s}W^{4+2n-2s}\|v\|_{s,S}^{2}\notag
 \end{align}
for $0\leq n\leq 2$ and all $W>s,$ where  $c_{s}=ce^{2s}.$

Hence there exist polynomials $\psy_{l}(\xi,\eta),\Phi_{l}(\xi,\eta)$, $0\leq
l\leq L$ such that
\begin{align}
 & \left\Vert \ub_{l}(\xi,\eta)-\psy_{l}(\xi,\eta)\right\Vert
_{2,Q}^{2}\leq c_{s}W^{-2s+8}(Cd^{s}s!)^{2},\notag\\
 &  \left\Vert p_{l}(\xi,\eta)-\phi_{l}(\xi,\eta)\right\Vert
_{1,Q}^{2}\leq c_{s}W^{-2s+6}(Cd^{s}s!)^{2}.\notag
\end{align} 
Consider the set of functions
$\Bigg\{
 \left\{ \psy_{l}(\xi,\eta)\right\},
 \left\{ \phi_{l}(\xi,\eta)\right\}
 \Bigg\}.$
 We wish to show
$\mathcal{R}^{L,W}\Bigg(
\left\{\psy_{l}(\xi,\eta)\right\},
\left\{\phi_{l}(\xi,\eta)\right\}\Bigg)\notag
$ is exponentially small.

Using concepts from trace theory, 
\begin{itemize}
\item for $u\in H^{2}(\Omega)$ we have
\begin{align}
 \|u\|_{\frac{3}{2},\Gamma}\leq c\|u\|_{2,\Omega}.\notag
\end{align}
%----------------------------
\item for $u\in H^{1}(\Omega)$ we have
\begin{align}
 \|u\|_{\frac{1}{2},\Gamma}\leq c\|u\|_{1,\Omega}.\notag
\end{align}
\end{itemize}
Above inequalities can be used to estimate the boundary related terms in the least-squares functional.
We can show that
\begin{align}
&{\mathcal{R}}^{L,W}\Bigg(
\left\{\psy_{l}(\xi,\eta)\right\},
\left\{\phi_{l}(\xi,\eta)\right\}\Bigg)\notag\\
&\leq cL\Bigg(c_{s}W^{-2s+8}lnW (Cd^{s}s!)^{2}+\tilde{c}_{s}W^{-2s+6} lnW(Cd^{s}s!)^{2}\Bigg)\notag\\
&            +cL\Bigg(c_{t}(2W-1)^{-2t+8}lnW (Cd^{t}t!)^{2}+\tilde{c}_{t}W^{-2t+6} lnW(Cd^{t}t!)^{2}\Bigg).\notag
%&\quad+ C_{t}(2W)^{-2t+8}(Cd^{t}t!)^{2}+
%C_{t}(2W)^{-2t+6}(Cd^{t}t!)^{2}.
\end{align}
% We choose $s=\Upsilon W$, where $0<\Upsilon<1.$
By using Sterling's formula and techniques from Theorem-3.1, \cite{TO}, we can see that
%we obtain
%\begin{align}
% &{\mathcal{R}}^{L,W}
%\Bigg(\left\{\psy_{l}(\xi,\eta)\right\},
%\notag\ \left\{\phi_{l}(\xi,\eta)\right\}\Bigg)
% \leq cL\left(W^{8}(2\pi\Upsilon W)(\Upsilon d)^{2\Upsilon
%W}\right).
%\end{align}
%Select $\Upsilon$ so that $(\Upsilon d)<1$.
 there exists a constant $b>0$ such that the estimate
\begin{align}
\mathcal{R}^{L,W}\Bigg(
\!\left\{\psy_{l}(\xi,\eta)\right\},
\!\left\{\phi_{l}(\xi,\eta)\right\}\Bigg)
\leq
ce^{-bW}\notag
\end{align} holds.
Let
$(\!\left\{ \zb_{l}\right\},\!\left\{s_{l}\right\})$ minimize
${\mathcal{R}}^{L,W}(\left\{\ub_{l}\right\},\left\{p_{l}\right\})$
 over all
$\Bigg\{\left\{ \ub_{l}\right\},\left\{q_{l}\right\}\Bigg\}
\in\Pi^{M,W},$
the space of spectral element functions.
 Then we have
\begin{align}
{\mathcal{R}}^{L,W}\big(
\!\left\{\zb_{l}\right\},
\!\left\{s_{l}\right\}\big)
\leq ce^{-bW}.\notag
\end{align}
Therefore, we can conclude that
\begin{align}
\mathcal{V}^{L,W}\Bigg(
\left\{\psy_{l}-\zb_{l}\right\},
 \left\{\Phi_{l}-s_{l}\right\}\Bigg)
\leq ce^{-bW}\notag
\end{align}
  where the functional
$\mathcal{V}^{L,W}$ equals ${\mathcal{R}}^{L,W}$ with zero right hand side data.
Hence using the stability Theorems \ref{thm1}, \ref{thm2}  (similar theorems for different boundary conditions) we obtain
\begin{align}
%=================================================================
\sum_{l=1}^{L}\left\Vert
\psy_{l}-\zb_{l}\right\Vert
_{2,Q}^{2}
%=================================================================
+ \sum_{l=1}^{L}\left\Vert
\phi_{l}-s_{l}\right\Vert
_{1,Q}^{2}
%=================================================================
\leq Ce^{-bW}.\label{eqn8s1}
\end{align}
It is easy to show that
\begin{align}
%================================================
\sum_{l=1}^{L}\left\Vert
\psy_{l}-\ub_{l}\right\Vert
_{2,Q}^{2}
%=================================================
+\sum_{l=1}^{L}\left\Vert\phi_{l}-p_{l}\right\Vert
_{1,Q}^{2}
%=================================================
\leq  Ce^{-bW}.\label{eqn9s1}
\end{align}
Combining~\eqref{eqn8s1} and~\eqref{eqn9s1} we obtain the result.
\begin{rem}
 We can define a set of corrections to the obtained non-conforming solution  as defined in \cite{KI2}, \cite{SM3}
  such that both velocity and pressure variable result in a conforming set of solutions $(\zb,q)$
and we obtain the error estimate
\begin{align}
 \|\ub-\zb\|_{1,\Omega}+\|p-q\|_{0,\Omega}\leq ce^{-bW}.
\end{align}
\end{rem}
%=========================================================================
\section{Numerical results}
In this section, we present numerical results on Stokes equations with different non-standard boundary conditions.
We denote the error between $\ub$ and its approximate solution $\zb$ in $\Hb^{1}$ norm by $\|E_{\mathbf{u}}\|_{1}$, the error between $p$ and  $q$ in $L^2$ norm by $\|E_{p}\|_{0}$. $\|E_{c}\|_{0}$ denotes the error in continuity equation in $L^2$ norm and it is used to measure mass conserving property of the scheme. $W$ denotes the polynomial order and 'itr' represents the number of iterations
required for convergence. Here all the computations have been carried out in single element except in examples 5, 6 and 7. Multi element refinement can be done 
easily for each case. 

Preconditioned conjugate gradient method is used to solve the normal equations. Quadratic form defined in
(\ref{precon}) is used as a preconditioner.
The structure of the preconditioner consists of 3 blocks for each element, 
   the first two blocks correspond to $H^{2}$ norm of spectral element function defined for velocity variable and third block correspond to 
   $H^{1}$ norm of spectral element function defined for pressure variable. We conclude that the condition number of the preconditioned system is $O{(ln W)}^2$, hence with increase in polynomial order condition number increases.

 We verify the exponential accuracy of the proposed schemes. However order of accuracy and the number of iterations may 
 vary from one example to another. For example, boundary conditions with derivative terms need more iterations compared to the boundary conditions without having any derivative terms. More iterations are required to approximate non-algebraic solutions compared to algebraic solutions.
\subsection{Ex-1 : Stokes equations with boundary condition (B14)}
Consider Stokes equations on $\Omega=[0,1]^{2}$. 
Let $\Gamma_{0}=\left\{(0,x_{2}):0\leq x_{2}\leq 1\right\}\cup\left\{(x_{1},1):0\leq x_{1}\leq 1\right\}
                \cup\left\{(1,x_{2}):0\leq x_{2}\leq 1\right\}$ and $\Gamma_{1}=\left\{(x_{1},0):0\leq x_{1}\leq 1\right\}$.
We have chosen the data such that
\begin{align}
\begin{cases}
 &u_{1}=x_{1}^{2}(1-x_{1})^{2}(2x_{2}-6x_{2}^{2}+4x_{2}^{3}),\\
 &u_{2}=x_{2}^{2}(1-x_{2})^{2}(-2x_{1}+6x_{1}^{2}-4x_{1}^{3}),\\
 &p=x_{1}^{2}-x_{2}^{2},
 \end{cases}\notag
\end{align}
form a set of the exact solutions \cite{JI1}
with boundary conditions $\ub=0\quad\!\!\!\mbox{on}\quad\!\!\!\Gamma_{0}$ and 
  $\ub_{\tau}=0,\sigma_{\nb}=\omega_{\nb}\quad\!\!\!\mbox{on}\quad\!\!\!\Gamma_{1}$ (B14).
  Table 2 shows the errors $\|E_{\mathbf{u}}\|_{1}$ , $\|E_{p}\|_{0}$
and $\|E_{c}\|_{0}$  and the number of iterations for various values of $W$. One can see that the errors decay very fast
and the decay of the error $\|E_{c}\|_{0}$ confirms the mass conserving property of the numerical scheme. 
 %=============================================
 \begin{table}[H]
~~~~~~~~~~~~~~~~~~~~~~~~~~~%
\begin{center}
\begin{tabular}{l ccccc}
\hline 
$W$ & $\|{E_{\ub}}\|_1$ & ${\|E_{p}\|}_0$  & $\|E_{c}\|_{0}$ & itr\\
\hline 
2 & 3.4205E-02 & 2.1800E-02 & 1.8922E-02 & 23\\
%--------------------------------------------
3 & 1.0216E-02 & 3.0624E-02 & 1.4758E-02 & 49\\
%----------------------------
4 & 4.6465E-04 & 9.6598E-04 & 1.4123E-04 & 63\\
%----------------------------
5 & 7.1188E-05 & 1.8080E-04 & 2.3220E-05 & 95\\
%----------------------------
6 & 7.7329E-06 & 2.5381E-05 & 4.2328E-06 & 133\\
%----------------------------
7 & 8.2112e-07 & 1.4825E-06 & 3.5594e-07 & 168\\
%----------------------------
8 & 3.3948E-08 & 9.6400E-08 & 1.7772E-08 & 210\\
%----------------------------
9 & 6.5440E-09 & 6.1380E-09 & 1.5928E-09 & 241\\
%----------------------------
10 & 1.9301E-10 & 1.6941E-10 & 5.5645E-11 & 283\\
\hline 
\end{tabular}
\end{center}
\caption{$\|E_{\ub}\|_{1}$, $\|E_{p}\|_{0}$ and $\|E_{c}\|_{0}$ for
various values of $W$ for Ex-1}
\end{table}
\subsection{Ex-2 : Stokes equations with boundary condition (B15)}
Let $\Omega=[0,1]^{2}$. 
Let $\Gamma_{0}=\left\{(0,x_{2}):0\leq x_{2}\leq 1\right\}\cup\left\{(x_{1},1):0\leq x_{1}\leq 1\right\}
                \cup\left\{(1,x_{2}):0\leq x_{2}\leq 1\right\}$ and $\Gamma_{1}=\left\{(x_{1},0):0\leq x_{1}\leq 1\right\}$ .
We have chosen the data with a set of the exact solutions \cite{JI1}
\begin{align}
\begin{cases}
 u_{1}=sin(\pi x_{1})sin(\pi x_{2}),\\
 u_{2}=sin(\pi x_{1})sin(\pi x_{2}),\\
 p=cos(\pi x_{1})exp(x_{1}x_{2}),
 \end{cases}\notag
\end{align}
with boundary conditions $\ub=0\quad\!\!\!\mbox{on}\quad\!\!\!\Gamma_{0}$ and 
  $\ub_{\nb}=0,\sigma_{\taub}=\omega_{\taub}\quad\!\!\!\mbox{on}\quad\!\!\!\Gamma_{1}$ (B15).
  Table 3 shows the errors $\|E_{\mathbf{u}}\|_{1}$ , $\|E_{p}\|_{0}$
and $\|E_{c}\|_{0}$  and the number of iterations for various values of $W$. The exact solution is non-algebraic in nature and we can see 
that the number of iterations is high compared to the number of iterations in example 1. 
  \begin{table}[H]
~~~~~~~~~~~~~~~~~~~~~~~~~~~%
\begin{center}
\begin{tabular}{l ccccc}
\hline 
$W$ & ${\|E_{\ub}\|}_1$ & $\|{E_p}\|_0$  & $\|E_{c}\|_{0}$ & itr\\
\hline 
2 & 7.0505E-01 & 1.5337E+00 & 4.0524E-01 & 15\\
%----------------------------------------------
3 & 1.0688E-01 & 1.0022E+00 & 6.2620E-02 & 91\\
%----------------------------
4 & 6.8930E-03 & 2.4056E-02 & 2.202E+00 & 270\\
%----------------------------
5 & 4.0976E-04 & 1.9562E-03 & 3.9979E-04 & 372\\
%----------------------------
6 & 4.9890E-05 & 1.2332E-04 & 3.1869E-05 & 425\\
%----------------------------
7 & 1.3691E-06 & 3.4167E-06 & 1.4384E-06 & 564\\
%----------------------------
8 & 2.1645E-07 & 6.5360E-07 & 1.1044E-07 & 637\\
%----------------------------
\hline 
\end{tabular}
\end{center}
\caption{$\|E_{\ub}\|_{1}$, $\|E_{p}\|_{0}$ and $\|E_{c}\|_{0}$ for
various values of $W$ for Ex-2}
\end{table}
Figure 1 presents the log of the errors $\|E_{\mathbf{u}}\|_{1}$ , $\|E_{p}\|_{0}$  against $W$.  The graph is almost linear and this confirms the exponetial 
accuracy of the numerical method. 
\begin{figure}
\begin{center}
~~~~~~~~~~~~\includegraphics[scale=0.65]{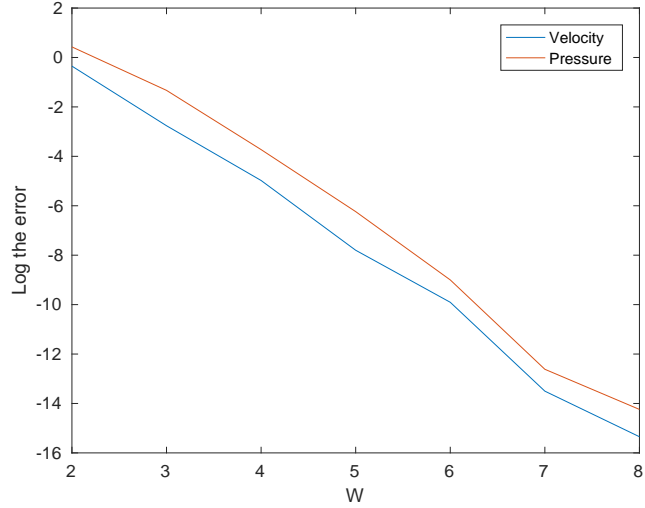}
\caption{Log of errors  $\|E_{\mathbf{u}}\|_{1}$ , $\|E_{p}\|_{0}$ against $W$ for Ex-2}
\end{center}
\end{figure}
 %=============================================
\subsection{Ex-3: Stokes equations with boundary condition (B5)}
Consider Stokes equations with boundary conditions $p=0, \nb\times\ub=0$ (B5) with zero right hand side, on boundary of $\Omega=\left\{(-1,1)\times(-1,1)\right\}\setminus\left\{(0,1)\times(-1,0)\right\}$ such that
\begin{align}
 \begin{cases}
  u_{1}=-x_{2}(x_{2}^{2}-1),\\
  u_{2}=-x_{1}({x_{1}}^2-1),\\
  p=x_{1}x_{2}({x_{1}}^2-1)({x_{2}}^2-1),\notag
 \end{cases}
\end{align}
form a set of the exact solutions \cite{DU1}.
Table 4 shows the errors $\|E_{\mathbf{u}}\|_{1}$ , $\|E_{p}\|_{0}$
and $\|E_{c}\|_{0}$  and the number of iterations for various values of $W$. The numerical results confirm the
fast convergence of the numerical scheme.
%---------------------------------
\begin{table}[H]
~~~~~~~~~~~~~~~~~~~~~~~~~~~%
\begin{center}
\begin{tabular}{l ccccc}
\hline 
$W$ & $\|E_{\ub}\|_1$ & $\|E_{p}\|_0$  & $\|E_{c}\|_{0}$ & itr\\
\hline 
2 & 1.2903E-01 & 9.5655E-02 & 1.1322E-01 & 24\\
3 & 1.3147E-03 & 5.8071E-04 & 3.9079E-04 & 32\\
4 & 7.0454E-04 & 5.2770E-05 & 2.9696E-05 & 48\\
5 & 1.9117E-05 & 1.7295E-05 & 5.1751E-06 & 69\\
6 & 1.2540E-07 & 4.7325E-08 & 4.0931E-08 & 97\\
\hline 
\end{tabular}
\end{center}
\caption{$\|E_{\ub}\|_{1}$, $\|E_{p}\|_{0}$ and $\|E_{c}\|_{0}$ for
various values of $W$ for Ex-3}
\end{table}
\subsection{Ex-4: Stokes equations with boundary condition (B12)}
We consider Stokes equations on 
$\Omega=[-1,1]^{2}$, 
\begin{align}
 \begin{cases}
  \Gamma_{1}=\left\{(x_{1},x_{2}):-1<x_{1}<1, x_{2}=\pm 1\right\},\\ 
  \Gamma_{2}=\left\{(x_{1},x_{2}): x_{1}=-1, -1<x_{2}<1\right\},\\
  \Gamma_{3}=\left\{(x_{1},x_{2}): x_{1}=1, -1<x_{2}<1\right\}.
  \end{cases}\notag
\end{align}  
Force function and boundary data are chosen such that 
\begin{align}
 \begin{cases}
  u_1=sin\pi x_{1} sin\pi x_{2}\\
  u_2=cos\pi x_{1} cos\pi x_{2}\\
  p=x_{1}x_{2}
 \end{cases}\notag
\end{align}
form a set of the exact solutions to the  Stokes equations with boundary condition (B12).
The given domain is divided into four elements  with uniform step sizes one in each direction. The approximate solution is obtained and Table 5 shows the relative errors $\frac{\|E_{\mathbf{u}}\|_{1}}{\left\Vert \mathbf{u}\right\Vert _{1}}$ , $\frac{\|E_{p}\|_{0}}{\left\Vert p\right\Vert _{0}}$
and $\|E_{c}\|_{0}$  and the number of iterations for various values of $W$. The number of iterations is high because of the non-algebraic nature of the exact solution and also the pressure and mixed conditions on the boundary. 
%=======================================              
\begin{table}[H]
~~~~~~~~~~~~~~~~~~~~~~~~~~~%
\begin{center}
\begin{tabular}{l ccccc}
\hline 
$W$ & $\frac{\|E_{\mathbf{u}}\|_{1}}{\left\Vert \mathbf{u}\right\Vert _{1}}$ & $\frac{\|E_{p}\|_{0}}{\left\Vert p\right\Vert _{0}}$  & $\|E_{c}\|_{0}$ & itr\\
\hline 
2 & 6.0646E-01 & 7.3160E+00 & 5.3414E+00 & 11\\
3 & 3.6217E-01 & 2.8203E+00 & 1.8667E+00 & 24\\
4 & 8.6433E-02 & 9.4839E-01 & 5.2985E-01 & 40\\
5 & 1.8758E-02 & 2.0515E-01 & 1.0918E-01 & 116\\
6 & 5.2405E-03 & 5.2657E-02 & 2.4415E-02 & 200\\
7 & 8.7689E-04 & 8.9130E-03 & 4.4200E-03 & 356\\
8 & 7.4496E-05 & 8.2511E-04 & 4.0602E-04 & 722\\
9 & 9.3018E-06 & 9.6641E-05 & 4.9851E-05 & 1153\\
10 & 2.0333E-06 & 2.1330E-05 & 1.1979E-05 & 1675\\
\hline 
\end{tabular}
\end{center}
\caption{$\frac{\|E_{\mathbf{u}}\|_{1}}{\left\Vert \mathbf{u}\right\Vert _{1}}$, $\frac{\|E_{p}\|_{0}}{\left\Vert p\right\Vert _{0}}$ and $\|E_{c}\|_{0}$ for various values of $W$ for Ex-4}
\end{table}
Figure 2 shows the log of the relative errors $\frac{\|E_{\mathbf{u}}\|_{1}}{\left\Vert \mathbf{u}\right\Vert _{1}}, \frac{\|E_{p}\|_{0}}{\left\Vert p\right\Vert _{0}} $ against $W$.  The graph is almost linear and this shows that method is exponentially 
accurate. 
\begin{figure}
\begin{center}
~~~~~~~~~~~~\includegraphics[scale=0.65]{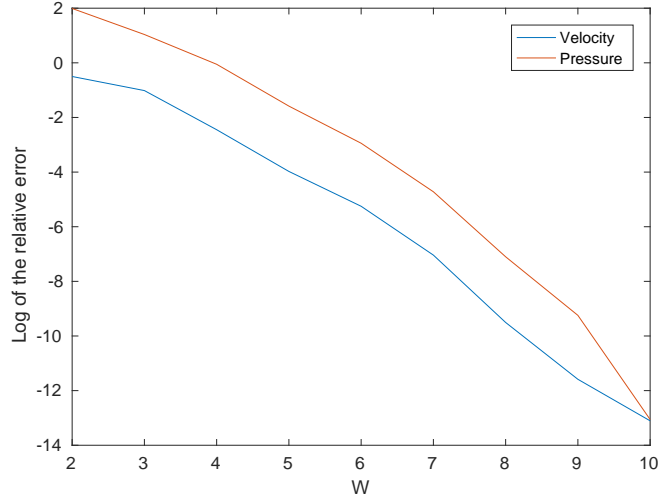}
\caption{Log of relative errors against $W$ for Ex-4}
\end{center}
\end{figure}
\subsection{Ex-5: Stokes equations with boundary conditions (B7) on $[-1,1]^{2}$}
Here we consider Stokes equations with the following boundary conditions
on $[-1,1]^{2}$ :
\begin{align}
\begin{cases}
\mathbf{u.n}=g,[(2\mathbb{D}\mathbf{u})\mathbf{n}]_{\taub}=\hb\,\,\textrm{on}\,\,\{(x_{1},x_{2}):-1<x_{1}<1,x_{2}=1\}\\
\mathbf{u=g_{1}}\,\textrm{\,on\,\,the\,\,other\,\,sides\,\,of}\,\,[-1,1]^{2}.
\end{cases}\notag
\end{align}
Data is chosen such that the exact solution of the boundary value
problem is 
\begin{align}
\begin{cases}
u_{1}=sin\,\pi x_{1}\,sin\,\pi x_{2},\\
u_{2}=cos\,\pi x_{1}\,cos\,\pi x_{2}\\
p=x_{1}x_{2\,\,\,\,\,\,\,\,\,\,\,\,\,\,\,\,\,\,\,}.
\end{cases}\notag
\end{align}
The given domain is divided into four square elements with equal sides of
length one. The error in the approximate solution is obtained for
different values of $W.$ Table 6 shows the relative errors
$\frac{\left\Vert E_{\mathbf{u}}\right\Vert _{1}}{\left\Vert \mathbf{u}\right\Vert _{1}},\frac{\left\Vert E_{p}\right\Vert _{0}}{\left\Vert p\right\Vert _{0}}$
and $\left\Vert E_{c}\right\Vert _{0}$ for different values of $W.$ The decay of the error $\|E_c\|_{0}$ shows the mass conserving 
property of the numerical scheme.
%===================================================================
\begin{table}[H]
~~~~~~~~~~~~~~~~~~~~~~~~~~~%
\begin{center}
\begin{tabular}{l ccccc}
\hline 
$W$ & $\frac{\left\Vert E_{\mathbf{u}}\right\Vert _{1}}{\left\Vert \mathbf{u}\right\Vert _{1}}$ & $\frac{\left\Vert E_{p}\right\Vert _{0}}{\left\Vert p\right\Vert _{0}}$ & $\left\Vert E_{c}\right\Vert _{0}$ & itr\\
\hline 
2    & 7.5989E-01 & 2.9508E+00 & 5.7610E+00 & 19\\
4    & 6.1126E-02 & 4.2430E-01 & 3.6661E-01 & 176\\
6    & 1.7586E-03 & 1.2257E-02 & 1.1238E-02 & 421\\
8    & 7.2036E-05 & 5.7808E-04 & 3.6999E-04 & 734\\
10   & 1.9453E-06 & 9.8102E-05 & 9.6509E-06 & 1693\\
\hline 
\end{tabular}
\end{center}
\caption{$\frac{\left\Vert E_{\mathbf{u}}\right\Vert _{1}}{\left\Vert \mathbf{u}\right\Vert _{1}},\frac{\left\Vert E_{p}\right\Vert _{0}}{\left\Vert p\right\Vert _{0}}$
and $\left\Vert E_{c}\right\Vert _{0}$ for different values of $W$ for Ex-5}
\end{table}
%====================================================================
\subsection{Ex-6: Stokes equations with boundary conditions (B3) on $[0,1]^{2}$}

Here we consider Stokes equations with the following boundary conditions
on $[0,1]^{2}$ 
\begin{eqnarray*}
\begin{cases}
\mathbf{u.n}=g,[(\nabla\mathbf{u}+(\nabla\mathbf{u})^{T}-pI)\mathbf{n}+\mathbf{u}]_{\mathbf{\taub}}=\mathbf{h}\,\,\textrm{on}\,\,\{(x_{1},x_{2}):0<x_{1}<1,x_{2}=0\}\\
\mathbf{u=g_{1}}\,\textrm{\,on\,\,the\,\,other\,\,sides\,\,of}\,\,[0,1]^{2}.
\end{cases}
\end{eqnarray*}

Data is chosen such that the exact solution of the boundary value
problem is 
\begin{eqnarray*}
\begin{cases}
u_{1}=sin\,\pi x_{1}\,sin\,\pi x_{2},\\
u_{2}=cos\,\pi x_{1}\,cos\,\pi x_{2},\\
p=x_{1}x_{2}.
\end{cases}
\end{eqnarray*}

The given domain is divided into four square elements with equal side
length $h=0.5.$ The error in the approximate solution is obtained
for different values of $W.$ Table 7 shows the relative
errors $\frac{\left\Vert E_{\mathbf{u}}\right\Vert _{1}}{\left\Vert \mathbf{u}\right\Vert _{1}},\frac{\left\Vert E_{p}\right\Vert _{0}}{\left\Vert p\right\Vert _{0}}$
and $\left\Vert E_{c}\right\Vert _{0}$ for different values of $W.$
\begin{table}[H]
~~~~~~~~~~~~~~~~~~~~~~~~~~~%
\begin{center}
\begin{tabular}{l ccccc}
\hline 
$W$ & $\frac{\left\Vert E_{\mathbf{u}}\right\Vert _{1}}{\left\Vert \mathbf{u}\right\Vert _{1}}$ & $\frac{\left\Vert E_{p}\right\Vert _{0}}{\left\Vert p\right\Vert _{0}}$ & $\left\Vert E_{c}\right\Vert _{0}$ & itr\tabularnewline
\hline 
2 & 1.6561E-01 & 2.4997E+00 & 4.7065E-01 & 10\tabularnewline

4 & 1.0226E-02 & 1.1310E-01 & 2.6468E-02 & 107\tabularnewline
 
6 & 7.2611E-04 & 8.7936E-03 & 1.9605E-03 & 419\tabularnewline
 
8 & 5.3046E-05 & 6.2276E-04 & 1.6031E-04 & 1091\tabularnewline
\hline 
\end{tabular}
\end{center}
\caption{$\frac{\left\Vert E_{\mathbf{u}}\right\Vert _{1}}{\left\Vert \mathbf{u}\right\Vert _{1}},\frac{\left\Vert E_{p}\right\Vert _{0}}{\left\Vert p\right\Vert _{0}}$
and $\left\Vert E_{c}\right\Vert _{0}$ for different values of $W.$}

\end{table}

\subsection{Ex-7: Stokes equations with boundary conditions (B10) on $[0,1]^{2}$}

Here we consider Stokes equations with the following boundary conditions
on $[0,1]^{2}$ 
\begin{eqnarray*}
\begin{cases}
\mathbf{u_{\taub}}=0,[(\nabla\mathbf{u}-pI)\mathbf{n}].\textrm{\ensuremath{\mathbf{n}}}=\mathbf{g}\,\,\textrm{on}\,\,\{(x_{1},x_{2}):0<x_{1}<1,x_{2}=0\}\\
\mathbf{u=g_{1}}\,\textrm{\,on\,\,the\,\,other\,\,sides\,\,of}\,\,[0,1]^{2}.
\end{cases}
\end{eqnarray*}

Data is chosen such that the exact solution of the boundary value
problem is 
\begin{eqnarray*}
\begin{cases}
u_{1}=sin\,\pi x_{1}\,sin\,\pi x_{2},\\
u_{2}=cos\,\pi x_{1}\,cos\,\pi x_{2},\\
p=x_{1}x_{2}.
\end{cases}
\end{eqnarray*}

The given domain is divided into four square elements with equal side
lengths $h=0.5.$ The error in the approximate solution is obtained
for different values of $W.$ Table 8 shows the relative
errors $\frac{\left\Vert E_{\mathbf{u}}\right\Vert _{1}}{\left\Vert \mathbf{u}\right\Vert _{1}},\frac{\left\Vert E_{p}\right\Vert _{0}}{\left\Vert p\right\Vert _{0}}$
and $\left\Vert E_{c}\right\Vert _{0}$ for different values of $W.$ The decay of the errors confirm the exponential decay
of the numerical method. The decay of the error confirms the exponential accuracy of the numerical method.
\begin{table}[H]
~~~~~~~~~~~~~~~~~~~~~~~~~~~%
\begin{center}
\begin{tabular}{l ccccc}
\hline 
$W$ & $\frac{\left\Vert E_{\mathbf{u}}\right\Vert _{1}}{\left\Vert \mathbf{u}\right\Vert _{1}}$ & $\frac{\left\Vert E_{p}\right\Vert _{0}}{\left\Vert p\right\Vert _{0}}$ & $\left\Vert E_{c}\right\Vert _{0}$ & itr\tabularnewline
\hline 
2 & 1.7125E-01 & 1.9638E+00 & 5.2266E+00 & 9\tabularnewline

4 & 1.1853E-02 & 1.6104E-01 & 2.8415E-02 & 106\tabularnewline
 
6 & 5.7420E-04 & 7.0487E-03 & 1.6417E-03 & 402\tabularnewline
 
8 & 4.4148E-05 & 5.2272E-04 & 1.2899E04 & 1039\tabularnewline
\hline 
\end{tabular}
\end{center}
\caption{$\frac{\left\Vert E_{\mathbf{u}}\right\Vert _{1}}{\left\Vert \mathbf{u}\right\Vert _{1}},\frac{\left\Vert E_{p}\right\Vert _{0}}{\left\Vert p\right\Vert _{0}}$
and $\left\Vert E_{c}\right\Vert _{0}$ for different values of $W.$}
\end{table}
\subsection{Ex-8: Stokes equations with boundary condition (B5) on $[-1,1]^{3}$}
Here we consider Stokes equations with boundary conditions $\ub\times\nb=\gb\times\nb, p=\phi$ on $\Omega= [-1,1]^{3}$. The data are chosen such that 
\begin{align*}
&u_{1}(x_{1},x_{2},x_{3})  =4x_{1}^{2}x_{2}x_{3}(1-x_{1})^{2}(1-x_{2})(1-x_{3})(x_{3}-x_{2}),\\
&u_{2}(x_{1},x_{2},x_{3})  =4x_{1}x_{2}^{2}x_{2}x_{3}(1-x_{1})(1-x_{2})^{2}(1-x_{3})(x_{1}-x_{3}),\\
&u_{3}(x_{1},x_{2},x_{3})  =4x_{1}x_{2}x_{3}^{2}(1-x_{1})(1-x_{2})(1-x_{3})^{2}(x_{2}-x_{1}),\\
&p(x_{1},x_{2},x_{3})  =-2x_{1}x_{2}x_{3}+x_{1}^{2}+x_{2}^{2}+x_{3}^{2}+x_{1}x_{2}+x_{1}x_{3}+x_{2}x_{3}\\
&\quad\quad\quad\quad\quad\quad\quad-x_{1}-x_{2}-x_{3},
\end{align*}
form an exact solution to Stokes problem with boundary condition (B5).
Here we consider a single element i.e $[-1,1]^3$. Table 9 shows the errors $\|E_{\mathbf{u}}\|_{1}$ , $\|E_{p}\|_{0}$
and $\|E_{c}\|_{0}$  and the number of iterations for various values of $W$.
\begin{table}[H]
~~~~~~~~~~~~~~~~~~~~~~~~~~~%
\begin{center}
\begin{tabular}{l ccccc}
\hline 
$W$ & ${\|E_{\ub}\|}_{1}$ & ${\|E_{p}\|}_{0}$  & ${\|E_{c}\|}_{0}$ & itr\\
\hline 
2 & 1.1089E+01 & 3.6122E-01 & 2.5517E+00 & 21\\
4 & 8.2164E-03 & 5.7646E-03 & 3.6079E-03 & 91\\
6 & 2.3867E-04 & 1.1298E-04 & 6.6529E-05 & 432\\
8 & 4.5072E-05 & 2.1736E-05 & 1.1137E-05 & 1512\\
\hline 
\end{tabular}
\end{center}
\caption{$\|E_{\ub}\|_{1}$, $\|E_{p}\|_{0}$ and $\|E_{c}\|_{0}$ for
various values of $W$ for Ex-8}
\end{table}
%-------------------------------------------------------------
\section{Conclusion and future work}
Here we have proposed a unified approach for Stokes equations with various non-standard boundary
conditions based on a non-conforming least-squares spectral element method. A minimal amount of change, only related to the boundary terms is required in the 
implementation part with the change of boundary conditions. Exponential accuracy has been observed with various boundary conditions in different test cases. For some cases, we have plotted the logarithm of error versus degree of the polynomial, 
which comes out to be almost straight line, which graphically ensures exponential accuracy. For some of the test problems, iteration
count is high which is due to the complexity in approximating non-algebraic solutions and also boundary conditions involving derivatives. We have presented numerical results for seven different types of boundary conditions. One can see similar results in other cases. 

We expect similar behaviour for Navier-Stokes equations with these non-standard boundary conditions
to yield similar order accuracy, which is an ongoing investigation.
\section*{Declaration}
Authors declare that they do not have any conflict of interest.

\end{document}